\newtheorem{theorem}{Theorem}[section]
\newtheorem{lemma}[theorem]{Lemma}
\newtheorem{prop}[theorem]{Proposition}
\newtheorem{cor}[theorem]{Corollary}
\theoremstyle{definition}
\newtheorem{definition}[theorem]{Definition}
\newtheorem{algorithm}[theorem]{Algorithm}
\newtheorem{example}[theorem]{Example}
\newcommand{\braces}[1]{\left\{#1\right\}}
\newcommand{\Z}{\mathbb{Z}}
\newcommand{\Q}{\mathbb{Q}}
\newcommand{\Ck}{\mathfrak{C}}
\newcommand{\Gc}{\mathcal{G}}
\newcommand{\Oc}{\mathcal{O}}
\newcommand{\Pc}{\mathcal{P}}
\newcommand{\pk}{\mathfrak{p}}
\newcommand{\qk}{\mathfrak{q}}
\newcommand{\uk}{\mathfrak{u}}
\newcommand{\GcO}{\mathcal{G}_1}
\newcommand{\GcEE}{\mathcal{G}_2}
\newcommand{\GcEO}{\mathcal{G}_3}
\numberwithin{theorem}{section}
\title{On primitive Pythagorean triples of special forms}
\author{Andrew Schmelzer}
\email{aschmelze001@csbsju.edu}
\author{Sunil Chetty}
\email{schetty@csbsju.edu}
\address{Mathematics Department, College of Saint Benedict and Saint John's University, St. Joseph, MN 56374}
\thanks{We thank the College of Saint Benedict and Saint John's University for supporting this research during Summer 2019. We thank Brandon Alberts for his helpful conversation ans suggestions regarding the density results. Computational exploration that guided this research was conducted using SageMath \cite{Sage}.}
\subjclass[2010]{11D09, 11D45}
\keywords{primitive Pythagorean triples, Pell equation, density}
\date{\today}
\begin{document}

\begin{abstract}
	We explore primitive Pythagorean triples of special forms $(a,b,b+g)$ and $(a,a+f,c)$, with $g,f\in\Z^+$. For each $g$ and $f$, we provide a method to generate infinitely many such primitive triples. Lastly, for each $g$, we describe the asymptotic density of primitive $(a,b,b+g)$ triples within all primitive triples of the same parity.
\end{abstract}

\maketitle
%
%
%
%
%
%
	
	\section{Introduction}
	\label{intro}
		A \emph{Pythagorean Triple} (PT) is a triple of integers $(a,b,c)\in\mathbb{Z}^3$ such that $a^2+b^2=c^2$. A Pythagorean triple is a \emph{primitive Pythagorean triple} (PPT) if $d>0$ and $d$ divides $a,b,$ and $c$ implies $d=1$. To see a Pythagorean triple is primitive, it suffices to show $d>0$ dividing any two of $a$, $b$, or $c$ implies $d=1$.\\
		\indent The two theorems below are classical results, and we include a proof of the second for completeness sake and to demonstrate the spirit of several arguments to follow in \S\ref{PPTg}.
	
	\begin{theorem}
	\label{PToldthm}
		All Pythagorean triples $(a,b,c)$ take the form $a=r^2-s^2$, $b=2rs$, $c=r^2+s^2$ (up to switching the parity for $a$ and $b$) for some $r,s\in\mathbb{Z}$ with $s<r$.
	\end{theorem}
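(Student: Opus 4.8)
The plan is to prove the parametrization in the primitive case, $\gcd(a,b,c)=1$, since every Pythagorean triple is a positive integer multiple of a primitive one and the substance of the statement lies there. So fix a primitive triple $(a,b,c)$ with $a^2+b^2=c^2$ and $a,b,c>0$.

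First I would pin down the parities. Primitivity forces $\gcd(a,b)=1$, since a common prime divisor of $a$ and $b$ would divide $c^2$ and hence $c$; in particular $a,b$ are not both even. They cannot both be odd either, for then $a^2+b^2\equiv 2\pmod 4$ while $c^2\equiv 0,1\pmod 4$. Hence exactly one of $a,b$ is even, and after switching the roles of $a$ and $b$ if necessary — this is the ``switching parity'' clause — I may assume $b$ is even and $a,c$ are odd.

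Next I would factor $b^2=c^2-a^2=(c-a)(c+a)$. Since $a$ and $c$ are odd, both $c-a$ and $c+a$ are even, so write $c-a=2u$ and $c+a=2v$ with $u,v\in\Z$; then $uv=(b/2)^2$ and $v>u\geq 1$. A short computation shows $\gcd(u,v)=1$: it divides $u+v=c$ and $v-u=a$, hence divides $\gcd(a,c)=1$. Now comes the one genuinely non-formal step — the appeal to unique factorization in $\Z$: because $uv$ is a perfect square and $u,v$ are coprime, each of $u$ and $v$ must itself be a perfect square, say $u=s^2$ and $v=r^2$ with integers $r>s\geq 1$. Substituting back yields $a=v-u=r^2-s^2$, $c=v+u=r^2+s^2$, and $b^2=4uv=(2rs)^2$, i.e. $b=2rs$, which is exactly the asserted form (and one moreover reads off $\gcd(r,s)=1$ and $r\not\equiv s\pmod 2$, the conditions that pin down primitivity).

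I expect the main obstacle to be precisely that unique-factorization step, namely that coprime factors of a square are themselves squares; the rest is bookkeeping with parities and a gcd identity. An alternative I would keep in reserve is the geometric argument: divide through by $c^2$ to view $(a/c,b/c)$ as a rational point on the unit circle, parametrize such points by lines of rational slope through $(-1,0)$, and clear denominators. That route also works, but extracting the exact primitive normalization from it requires comparable care, so I would present the elementary factorization argument as the main proof.
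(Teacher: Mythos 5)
Your proof is correct and complete in the primitive case, and it is the classical argument; note that the paper does not actually prove Theorem \ref{PToldthm} at all but simply cites Davenport, where essentially this same factorization argument (write $b^2=(c-a)(c+a)$, observe the two coprime halves must each be squares by unique factorization in $\Z$) is carried out. The parity analysis, the verification that $\gcd(u,v)=1$ via $u+v=c$ and $v-u=a$, and the unique-factorization step are all sound. The one point worth tightening is your opening reduction. A non-primitive triple $d\cdot(a',b',c')$ is in general \emph{not} of the form $(r^2-s^2,\,2rs,\,r^2+s^2)$: for instance $(9,12,15)$ satisfies $9^2+12^2=15^2$ but admits no such representation in either ordering, since $2rs=12$ forces $rs=6$ and neither $(r,s)=(6,1)$ nor $(3,2)$ gives $r^2-s^2=9$, while $2rs=9$ is impossible. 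So the general statement does not follow from the primitive case by scaling; as literally written the theorem is only true for primitive triples (which is the only way the paper uses it, e.g.\ in Proposition \ref{Conj1}). Restricting to the primitive case is therefore the right move, but the justification should be that the claim is only meaningful there, not that the general case reduces to it.
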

	\begin{proof}
		See, for example, \cite[\S VII.2]{Davenport}
	\end{proof}
	\begin{theorem}
	\label{PPToldthm}
		In the notation of Theorem \ref{PToldthm}, a Pythagorean triple is primitive if and only if $\gcd(r,s)=1$.
	\end{theorem}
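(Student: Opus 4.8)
The plan is to prove both implications directly, by a prime‑divisor argument built on the parametrization $a = r^2 - s^2$, $b = 2rs$, $c = r^2 + s^2$ of Theorem~\ref{PToldthm} together with the reduction noted in the introduction: a Pythagorean triple is primitive as soon as no integer greater than $1$ divides two of its entries. The ($\Rightarrow$) direction is immediate. If $(a,b,c)$ is primitive and a prime $p$ divides $\gcd(r,s)$, then $p$ divides $r^2 - s^2 = a$ and $r^2 + s^2 = c$ (in fact all three entries), contradicting primitivity; hence $\gcd(r,s) = 1$.

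For ($\Leftarrow$), assume $\gcd(r,s) = 1$ and aim to show $\gcd(a,c) = 1$, which by the reduction above already forces primitivity. Suppose a prime $p$ divides both $a = r^2 - s^2$ and $c = r^2 + s^2$. Then $p$ divides their sum $2r^2$ and their difference $2s^2$. If $p$ is odd, this yields $p \mid r$ and $p \mid s$, contradicting $\gcd(r,s) = 1$. The case $p = 2$ is the subtle one: $2$ divides both $a$ and $c$ exactly when $r \equiv s \pmod 2$, and since $\gcd(r,s) = 1$ rules out $r$ and $s$ both even, this happens precisely when $r$ and $s$ are both odd, in which case $2$ divides all of $a$, $b$, $c$ and the triple fails to be primitive (for example $r = 3$, $s = 1$ gives $(8,6,10)$). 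So the converse, as literally stated, needs the extra hypothesis that $r$ and $s$ have opposite parity; granting this, $a$ and $c$ are both odd, the case $p = 2$ does not arise, and the odd‑prime argument finishes the proof.

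I expect the only genuine obstacle to be this parity issue in the converse; the rest is a one‑line manipulation of the sum and difference of $a$ and $c$. In a full write‑up I would either add the hypothesis $r \not\equiv s \pmod 2$ (or fold it into the primitive case of Theorem~\ref{PToldthm}) and then run the ($\Leftarrow$) argument verbatim, or, to keep the statement exactly as quoted, note that representatives with $r \equiv s \pmod 2$ are never primitive and so may be discarded at the outset, after which the same argument applies.
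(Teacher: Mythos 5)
Your proof is correct and follows essentially the same route as the paper's: both directions reduce to the observation that a common divisor of $c$ and $a$ divides $c+a=2r^2$ and $c-a=2s^2$, hence (away from $2$) divides $\gcd(r,s)$. Your parity caveat in the converse is a legitimate catch --- as literally stated the claim fails for $r\equiv s\pmod 2$, e.g.\ $r=3$, $s=1$ gives $(8,6,10)$ --- and the paper makes exactly the discarding move you describe, asserting that $r$ and $s$ ``must'' have opposite parity since otherwise $a$ and $b$ are both even; your write-up simply flags this tacit restriction more honestly.
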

	\begin{proof}
		If $(a,b,c)$ is a PPT and $\gcd(r,s)=d$ then $d$ is also a factor of $a=r^2-s^2$ and $b=2rs$, so $d=1$.\\
		\indent Now suppose $\gcd(r,s)=1$. The parity of $r$ and $s$ must be opposite, otherwise $a$ and $b$ would both be even. So we take $r$ odd and $s$ even. If $d$ is a common factor of $a=r^2-s^2$, $b=2rs$, and $c=r^2+s^2$, then $d$ is odd because $a$ is odd. Also, $d$ divides the sum and difference of $c$ and $a$, i.e. $2s^2=c-a$ and $2r^2=c+a$, and hence $d$ divides $\gcd(r,s)$, so $d\mid 1$.
	\end{proof}

	Because there are infinitely many pairs $r,s\in\Z$ with $\gcd(r,s)=1$, it follows from Theorem \ref{PPToldthm} that there are infinitely many primitive Pythagorean triples. However, the result does not immediately provide information about families of primitive triples that share a particular quality.
	\section{Primitive triples of the form $(a,b,b+g)$}
	\label{PPTg}
	Using only elementary algebra, one can show that there are infinitely many triples of the form $(a,b,b+1)$. Extending this, one can ask if there are infinitely many PPT of the form $(a,b,b+g)$, with $g>1$.
	\subsection{Conditions on $g$}
	\label{gconditions}
	\begin{lemma}
	\label{PTparity}
		In a PPT, exactly one of $a,b$ is odd, and $c$ is always odd.
	\end{lemma}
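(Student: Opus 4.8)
The plan is to use the parametrization from Theorem~\ref{PToldthm} together with the primitivity criterion of Theorem~\ref{PPToldthm}. Write $a = r^2 - s^2$, $b = 2rs$, $c = r^2 + s^2$ (after possibly swapping $a$ and $b$) with $\gcd(r,s) = 1$ and $s < r$. Since $b = 2rs$ is manifestly even, the content of the lemma is really two assertions: that $a$ is odd, and that $c$ is odd.

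First I would argue that $r$ and $s$ cannot have the same parity. If both were even, then $2 \mid \gcd(r,s)$, contradicting $\gcd(r,s)=1$; if both were odd, then $a = r^2 - s^2$ and $c = r^2 + s^2$ would both be even, and moreover $b = 2rs$ is even, so $2$ would divide all three entries, contradicting primitivity. Hence exactly one of $r,s$ is odd and the other is even. Then $r^2 - s^2$ and $r^2 + s^2$ are each a sum/difference of an odd square and an even square, hence odd. This shows $a$ is odd and $c$ is odd, so among $\{a,b\}$ exactly one (namely $b$, in this normalization) is odd, and $c$ is odd. Since the roles of $a$ and $b$ may be interchanged, in general exactly one of $a,b$ is odd.

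Alternatively — and perhaps more cleanly, avoiding the parametrization — I would work directly modulo $4$. Squares are $\equiv 0$ or $1 \pmod 4$. If $a$ and $b$ were both even, then $2$ would divide $a, b, c$, contradicting primitivity, so at least one of $a,b$ is odd. If both $a$ and $b$ were odd, then $a^2 + b^2 \equiv 1 + 1 = 2 \pmod 4$, but $c^2 \equiv 0$ or $1 \pmod 4$, a contradiction; hence not both are odd. Therefore exactly one of $a,b$ is odd, and consequently $c^2 = a^2 + b^2$ is odd plus even, i.e. odd, so $c$ is odd.

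I do not anticipate a genuine obstacle here; both arguments are short. The only point requiring a little care is invoking primitivity correctly to rule out the all-even case — one should note (as the introduction already observes) that a common factor of any two entries forces a common factor of all three. I would likely present the mod-$4$ argument as the main proof since it is self-contained, and remark that it also follows immediately from the parametrization in Theorems~\ref{PToldthm} and~\ref{PPToldthm}.
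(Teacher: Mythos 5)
Your mod-$4$ argument, which you propose as the main proof, is exactly the argument the paper gives: rule out the all-even case by primitivity, rule out the all-odd case because $a^2+b^2\equiv 2\pmod 4$ is not a square, and conclude $c$ is odd. The proposal is correct and takes essentially the same approach as the paper (the parametrization-based alternative is also fine but unnecessary).
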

	\begin{proof}
		If both $a$ and $b$ are even, then $c$ is also even and $(a,b,c)$ is not primitive. If $a$ and $b$ are both odd, then $a^2\equiv b^2\equiv 1\mod{4}$. Thus $c^2\equiv 2\mod{4}$, but 2 is not a square modulo 4. Thus, with all other cases exhausted, we have that exactly one of $a,b$ is odd.
	\end{proof}
%
%
%
	The following proposition gives the form of $g$ in primitive $(a,b,b+g)$ triples.	
	\begin{prop}
	\label{Conj1}
		If $(a,b,b+g)$ is a PPT then $g=m^2$, with $m$ odd, or $g=2m^2$, with $m$ any positive integer.
	\end{prop}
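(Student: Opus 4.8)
The plan is to reduce to the classical parametrization of Theorem~\ref{PToldthm} and then read off $g$ as a difference of two of the parametrized coordinates. Write a given PPT $(a,b,b+g)$ in the form supplied by Theorem~\ref{PToldthm}: there are integers $r>s\ge 1$ with $\gcd(r,s)=1$ and $r\not\equiv s\pmod 2$ (coprimality and opposite parity coming from Theorem~\ref{PPToldthm} and its proof, together with Lemma~\ref{PTparity}) such that $\{a,b\}=\{\,r^2-s^2,\ 2rs\,\}$ and $c=b+g=r^2+s^2$. Since the hypotenuse equals $r^2+s^2$ regardless of the labeling of the legs, the only genuine choice is whether $b$ is the even leg $2rs$ or the odd leg $r^2-s^2$, and I would simply split into these two cases.

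If $b=2rs$, then
\[
g=c-b=r^2+s^2-2rs=(r-s)^2,
\]
and because $r$ and $s$ have opposite parity, $m:=r-s$ is odd and positive, so $g=m^2$ with $m$ odd. If instead $b=r^2-s^2$, then
\[
g=c-b=(r^2+s^2)-(r^2-s^2)=2s^2,
\]
so $g=2m^2$ with $m:=s$, and here $m$ is an arbitrary positive integer (the condition $s\ge 1$ merely excludes the degenerate triple $(0,r^2,r^2)$). These two cases exhaust the possibilities, which proves the stated form of $g$.

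I do not expect a substantive obstacle: once the parametrization is in hand, each case is a one-line algebraic identity. The only points that need a little care are bookkeeping ones — justifying that $r$ and $s$ may be taken of opposite parity, so that $r-s$ is genuinely odd in the first case, and observing that a bona fide PPT forces $s\ge 1$, so that $g$ is a positive integer and $m$ is positive. Both facts I would lift directly from the proof of Theorem~\ref{PPToldthm} rather than reproving them. Note that this establishes only the necessity of the two forms; the converse (that each such $g$ actually arises) would be addressed separately by exhibiting explicit triples.
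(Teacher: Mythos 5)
Your proposal is correct and follows essentially the same route as the paper: reduce to the $(r^2-s^2,2rs,r^2+s^2)$ parametrization and split according to which leg $b$ is, getting $g=(r-s)^2$ with $r-s$ odd in one case and $g=2s^2$ in the other. The only cosmetic difference is that the paper enters the case split via the parity of $g$ and derives the oddness of $m=r-s$ by contradiction with Lemma~\ref{PTparity}, whereas you read it off directly from the opposite parity of $r$ and $s$; the content is identical.
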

	\begin{proof}
		Let $(a,b,b+g)$ be a PPT, and suppose $g$ is odd. Because of Lemma \ref{PTparity}, it must be $b=2rs$, otherwise $c=b+g$ would be even and the triple would not be primitive. Then by Theorem \ref{PToldthm} our triple fits the form $a=r^2-s^2$, $b=2rs$, and $b+g=r^2+s^2$. Solving for $g$ yields $g=r^2+s^2-2rs=(r-s)^2$, so for $m=r-s$ we have $g=m^2$.\\
		\indent Suppose $m$ is even. Note that $a=(r-s)(r+s)$. If $r-s$ is even, then $r+s$ is even too, and so $a$ would be even. Then both of $a,b$ would be even, and that contradicts Lemma \ref{PTparity}, so $m$ must be odd.\\
		\indent Now, suppose $g$ is even. Then, again using Theorem \ref{PToldthm}, our triple fits the form $a=2rs$, $b=r^2-s^2$, and $b+g=r^2+s^2$. As before, we use Lemma \ref{PTparity} to know $b=r^2-s^2$, otherwise $b$ and $c=b+g$ are even. Solving for $g$ yields $g=2s^2$, so for $m=s$ and we have $g=2m^2$.
	\end{proof}	
	\subsection{Generating primitive $(a,b,b+g)$ triples}
	\label{genPPTg}
	Now that we know the shape of all possible $g$, we discuss generating the triples.	
	\begin{lemma}
	\label{bfromag}
		Any PPT of the form $(a,b,b+g)$ is determined by $a$ and $g$.
	\end{lemma}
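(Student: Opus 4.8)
The plan is to read off $b$ directly from the Pythagorean relation that $(a,b,b+g)$ must satisfy, exhibiting it as an explicit function of $a$ and $g$ alone. So first I would write the defining equation $a^2+b^2=(b+g)^2$ and expand the right-hand side to get $a^2+b^2=b^2+2bg+g^2$. The quadratic terms in $b$ cancel, and what remains is the \emph{linear} equation $a^2-g^2=2bg$.

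Since $g\in\Z^+$ we have $g\neq 0$, so I can solve: $b=\dfrac{a^2-g^2}{2g}$. The right-hand side involves only $a$ and $g$, hence $b$ is forced, and then $c=b+g$ is forced as well. That is precisely the assertion that a PPT of the form $(a,b,b+g)$ is determined by $a$ and $g$: two such triples sharing the same first coordinate and the same gap $g$ must coincide.

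I do not expect any real obstacle here — the argument is a one-line manipulation of the Pythagorean equation, with the only bookkeeping point being that $g>0$ guarantees the division is legitimate. What the lemma does \emph{not} give, and what I would be careful not to overclaim, is that every pair $(a,g)$ produces a valid triple: the formula only pins down the unique candidate value of $b$, and the subsequent generating results must still verify that $a^2-g^2$ is a positive multiple of $2g$ and that the resulting triple is primitive. For the statement as phrased, though, uniqueness is immediate from the displayed formula for $b$.
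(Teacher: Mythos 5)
Your proof is correct and follows exactly the paper's own argument: expand $a^2+b^2=(b+g)^2$, cancel the $b^2$ terms, and solve the resulting linear equation to get $b=\frac{a^2-g^2}{2g}$. Your added remark that the formula only pins down the unique candidate for $b$ (and does not guarantee existence or primitivity) is a sensible clarification but not needed for the statement.
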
	
	\begin{proof}
		We know that $a^2+b^2=(b+g)^2=b^2+2bg+g^2$. Solving for $b$ yields $$b=\frac{a^2-g^2}{2g}.$$
	\end{proof}
	\begin{theorem}
	\label{Conj2}
		For each $g\in \{(2k+1)^2:k\in\mathbb{Z}\}\cup\{2(m)^2:m\in\mathbb{Z}\}$, there are an infinite number of PPTs of the form $(a,b,b+g)$.
	\end{theorem}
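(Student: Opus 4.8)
The plan is to invoke Lemma~\ref{bfromag}: a PPT of the form $(a,b,b+g)$ is pinned down by $a$ and $g$ via $b = (a^2-g^2)/(2g)$, so it suffices to exhibit, for each admissible $g$, infinitely many values of $a$ making $b$ a positive integer and the triple primitive. The trick in each case is to choose the shape of $a$ so that integrality of $b$ is automatic, and then to recognize the resulting triple as the standard parametrization of Theorem~\ref{PToldthm} so that primitivity is governed by Theorem~\ref{PPToldthm}.

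\textbf{Case $g = m^2$, $m$ odd.} Take $a = mn$ with $n$ odd and $n \ge m+2$. Then $b = (n^2-m^2)/2 \in \Z^{+}$, and on writing $u = (n-m)/2$, $v = (n+m)/2$ one checks $(a,b,b+g) = (v^2-u^2,\,2uv,\,v^2+u^2)$, the form of Theorem~\ref{PToldthm} with $r=v$, $s=u$; since $v-u=m$ is odd, $u$ and $v$ have opposite parity automatically. By Theorem~\ref{PPToldthm} this is a PPT exactly when $\gcd(u,v)=1$, i.e. $\gcd(u,m)=1$. Infinitely many positive integers $u$ are coprime to $m$, and they give strictly increasing values $b = 2u(u+m)$, hence infinitely many distinct such PPTs.

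\textbf{Case $g = 2m^2$.} Take $a = 2mn$ with $n > m$. Then $b = n^2-m^2 \in \Z^{+}$ and $(a,b,b+g) = (2mn,\,n^2-m^2,\,n^2+m^2)$, the form of Theorem~\ref{PToldthm} with $r=n$, $s=m$. By Theorem~\ref{PPToldthm} this is a PPT exactly when $\gcd(n,m)=1$ and $n \not\equiv m \pmod 2$ (the parity condition being needed to avoid a common factor of $2$). If $m$ is even, pick $n$ odd and coprime to $m$; if $m$ is odd, pick $n = 2\ell$ with $\gcd(\ell,m)=1$. Either way infinitely many $n$ qualify, giving infinitely many distinct PPTs.

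The substantive content is the choice of the parametrizing substitution $a = mn$ or $a = 2mn$: this is what simultaneously forces $b \in \Z^{+}$ and puts the triple in standard form, after which primitivity collapses to a coprimality-with-$m$ condition (plus, in the second case, a fixed parity), which is met by infinitely many parameters. The remaining points — positivity of $b$, that the third coordinate really is $b+g$ (immediate since $v^2+u^2-2uv = m^2 = g$ in the first case and $(n^2+m^2)-(n^2-m^2) = 2m^2 = g$ in the second), and distinctness of the triples — are routine, so I do not expect a genuine obstacle once the substitutions are in place.
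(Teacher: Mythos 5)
Your proof is correct and follows essentially the same route as the paper: choose $a$ as a suitable multiple of $m$ so that Lemma~\ref{bfromag} gives an integer $b$, recognize the resulting triple in the parametrization of Theorem~\ref{PToldthm}, and reduce primitivity to a coprimality-with-$m$ condition met infinitely often. One point where you are more careful than the paper: in the case $g=2m^2$ with $m$ odd, the paper takes $n$ to be an odd prime, but then $r=n$ and $s=m$ are both odd and the triple $(2mn,\,n^2-m^2,\,n^2+m^2)$ has all entries even (e.g.\ $m=1$, $n=3$ gives $(6,8,10)$), whereas your choice $n=2\ell$ with $\gcd(\ell,m)=1$ correctly enforces the opposite-parity condition that Theorem~\ref{PPToldthm} implicitly requires.
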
	
	\begin{proof}
		For each case of $g$, we choose values $t$ and $q$, and construct a family $(a_n)=(tn+q)$ parametrized by $n\in\Z^+$ in order to define an infinitely family of PT $(a_n,b_n,b_n+g)$.\\
		
		Let $g=m^2$ for some odd $m$. Let $t=2m$, $q=m$, so $a_n=2mn+m=m(2n+1)$. Then, applying Lemma \ref{bfromag},
			$$b_n=\frac{m^2(2n+1)^2-m^4}{2m^2}=\frac{(2n+1)^2-m^2}{2}=\frac{(2n+1-m)(2n+1+m)}{2}.$$
		Note that in the right-most expression both elements of the numerator are even, making $b_n$ even. Then our triple is
			$$\left( m(2n+1),\frac{(2n+1)^2-m^2}{2},\frac{(2n+1)^2+m^2}{2} \right).$$
		Arguing as in Theorem \ref{PPToldthm}, this form generates a PPT whenever $\gcd(m,2n+1)=1$ There are an infinite number of primes of the form $2n+1$ which do not divide $m$, so that condition holds an infinite number of times.\\
		
		Now, let $g=2m^2$ for some odd $m$. Let $t=2m$, $q=0$, so $a_n=tn=2mn$, and then
			$$b=\frac{a^2-g^2}{2g}=\frac{4m^2n^2-4m^4}{4m^2}=n^2-m^2.$$
		Then we have $\left(2mn, n^2-m^2, n^2 + m^2 \right)$ is a PPT whenever $\gcd(n,m)=1$, exactly by Theorem \ref{PPToldthm}. There are an infinite number of primes $n$ not dividing $m$, so that condition holds an infinite number of times.\\
		
		Next, let $g=2m^2$ for some even $m$. Let $t=4m$ and $q=2m$, so $a_n=tn+q=2m(2n+1).$ Then
			$$b_n=\frac{a^2-g^2}{2g}=\frac{4m^2(2n+1)^2-4m^4}{4m^2}=(2n+1)^2-m^2.$$
		Then we have 
			$$\left(2m(2n+1), (2n+1)^2-m^2, (2n+1)^2 + m^2 \right).$$
		This form generates a PPT whenever $\gcd(2n+1,m)=1$, by Theorem \ref{PPToldthm} with $r=2n+1$ and $s=m$. There are an infinite number of primes of the form $2n+1$ not dividing $m$, so that condition holds an infinite number of times.\\
		
		Thus, for every $g\in \{(2m+1)^2:m\in\mathbb{Z}\}\cup\{2(m)^2:m\in\mathbb{Z}\}$, there are an infinite number of PPTs of the form $(a,b,b+g)$. 
	\end{proof}
	Note the above construction of $(a_n,b_n,b_n+g)$ provides a family of PT with strictly increasing sequence of first coordinates $(a_n)_g$, for each $g$ as in Theorem \ref{Conj2}. In turn, we have a subfamily of PPTs with strictly increasing first coordinates. 
	\begin{cor}
	\label{CorConj2}
		Every PPT of the form $(a,b,b+g)$, is contained in one of the families $(a_n,b_n,b_n+g)$. Moreover, $(a,b,b+g)$ is obtained from $r_n$ and $s_n$ (as in Theorem \ref{PPToldthm}) according to:
			\begin{center}
			\begin{tabular}{l|l|l}
				$g=m^2$ & $g=2m^2$ & $g=2m^2$\\
				\hline
				$m$ odd & $m$ odd & $m$ even\vspace{-2mm}\\
				 & & \\
				$r_n=\frac{1}{2}\left(2n+1+m\right)$ & $r_n=n$ & $r_n=2n+1$\\
				$s_n=\frac{1}{2}\left(2n+1-m\right)$ & $s_n=m$ & $s_n=m$\vspace{-2mm}\\
				 & & \\
				$\gcd(2n+1,m)=1$ & $\gcd(n,m)=1$ & $\gcd(2n+1,m)=1$\\
			\end{tabular}
			\end{center}
	\end{cor}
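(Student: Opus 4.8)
The plan is to show that an arbitrary PPT of the form $(a,b,b+g)$ coincides with some member $(a_n,b_n,b_n+g)$ of the family constructed in Theorem \ref{Conj2}, and simultaneously read off the corresponding $(r_n,s_n)$ from the parametrization in Theorem \ref{PToldthm}. By Lemma \ref{bfromag}, a triple of this shape is completely determined by the pair $(a,g)$, so it suffices to prove that $a$ is one of the values $a_n$ produced in the relevant case of Theorem \ref{Conj2}. First I would split into the three cases of Proposition \ref{Conj1}: $g=m^2$ with $m$ odd, $g=2m^2$ with $m$ odd, and $g=2m^2$ with $m$ even. In each case, the proof of Proposition \ref{Conj1} already exhibits $(a,b,b+g)$ in the form of Theorem \ref{PToldthm}: for $g$ odd one has $a=r^2-s^2$, $b=2rs$, $b+g=r^2+s^2$ with $m=r-s$; for $g=2m^2$ one has $a=2rs$, $b=r^2-s^2$, $b+g=r^2+s^2$ with $m=s$.

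Next I would solve these relations for $a$ in terms of $m$ and a single free parameter, and check that the free parameter ranges over exactly the set indexing the family in Theorem \ref{Conj2}. In the case $g=m^2$, $m$ odd: from $m=r-s$ we get $r=s+m$, so $a=r^2-s^2=(r-s)(r+s)=m(2s+m)$; since $r,s$ have opposite parity (as in the proof of Theorem \ref{PPToldthm}) and $m$ is odd, $r$ and $s$ are one even and one odd with $r=s+m$, so $2s+m$ is an odd integer, say $2s+m=2n+1$, giving $a=m(2n+1)=a_n$ and $s=\tfrac12(2n+1-m)$, $r=\tfrac12(2n+1+m)$ exactly as in the first column. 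In the case $g=2m^2$, $m$ odd: here $s=m$ and $a=2rs=2mr$, so setting $r=n$ yields $a=2mn=a_n$ and the middle column $r_n=n$, $s_n=m$. In the case $g=2m^2$, $m$ even: again $s=m$, and $a=2rs=2mr$; since $m$ is even, $r$ must be odd (opposite parity), so $r=2n+1$ for some $n$, giving $a=2m(2n+1)=a_n$ and the third column $r_n=2n+1$, $s_n=m$. In all three cases the primitivity constraint $\gcd(r,s)=1$ from Theorem \ref{PPToldthm} translates into precisely the gcd condition listed in the bottom row of the table.

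The only genuinely delicate point is the first case: one must verify that the quantity $2s+m$ is a positive odd integer of the correct sign, i.e. that it is of the form $2n+1$ with $n\in\Z^+$, matching the indexing $a_n=m(2n+1)$ of Theorem \ref{Conj2}; this is where the parity argument (opposite parity of $r$ and $s$, plus $m$ odd) and the positivity of $a$ are used, together with $s<r$. The remaining cases are immediate once $s=m$ is substituted. A final sentence would note that, conversely, every $(a_n,b_n,b_n+g)$ in the table that satisfies its gcd condition is a PPT by Theorem \ref{PPToldthm}, so the correspondence between PPTs of the form $(a,b,b+g)$ and admissible indices $n$ is exact, which also re-proves the ``moreover'' clause. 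I expect the main obstacle to be nothing more than bookkeeping: making sure the substitution $2s+m=2n+1$ is legitimate (right parity, right range) so that the recovered $(r_n,s_n)$ literally are the table entries rather than merely equivalent to them up to sign or an index shift.
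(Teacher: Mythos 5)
Your proposal is correct and follows essentially the same route as the paper's own (much terser) proof: both reduce to the parametrization extracted in Proposition \ref{Conj1} ($m=r-s$, $a=(r-s)(r+s)$ for $g$ odd; $m=s$, $a=2rs$ for $g$ even) and then match $a$ against the families $a_n$ of Theorem \ref{Conj2}, with the gcd condition coming from $\gcd(r,s)=1$. Your write-up simply supplies the parity and indexing bookkeeping that the paper leaves implicit.
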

	\begin{proof}
		For $g$ odd, we have $m$ odd. The proof of Proposition \ref{Conj1} shows $m=r-s$, $a=(r-s)(r+s)$, and both are odd. So $a=mk$ for some odd $k$. For $g$ even, Proposition \ref{Conj1} shows $m=s$ and $a=2rs$, so $a=mk$, and both cases of $k$ even and $k$ odd arise as $a_n$.
	\end{proof}
	\section{Primitive triples of the form $(a,a+f,c)$}
	\label{PPTf}
	
	We now discuss a second form of PPT, with a fixed difference $f=b-a$. First, we look into the form of $f$.
	\subsection{Conditions on $f$}
	\label{fconditions}
	\begin{prop}
	\label{fprimes}
		Suppose $p$ is a prime dividing $f$. If $(a,a+f,c)$ is a PPT, then $p\equiv \pm 1$ (mod $8$).
	\end{prop}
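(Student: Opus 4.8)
The plan is to reduce the Pythagorean relation modulo $p$ and observe that $2$ is forced to be a quadratic residue mod $p$; the stated congruence on $p$ is then exactly the second supplement to quadratic reciprocity.

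First I would record two preliminary facts about the prime $p\mid f$. Since $(a,a+f,c)$ is a PPT, Lemma \ref{PTparity} says exactly one of $a$ and $b=a+f$ is odd, so $f=b-a$ is odd; hence $p$ is odd. Second, $p\nmid a$: if $p$ divided $a$, then from $p\mid f$ we would get $p\mid(a+f)=b$, so $p$ would be a common divisor of two coordinates of the triple, contradicting primitivity. These two observations are the only slightly delicate points, and they must be in place before the main computation so that we may divide by $a$ and by $2$ modulo $p$.

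Next I would reduce $a^2+b^2=c^2$ modulo $p$. Because $b=a+f\equiv a\pmod p$, this gives $c^2\equiv a^2+a^2=2a^2\pmod p$. Since $p\nmid a$, the class of $a$ is invertible mod $p$, and multiplying through by $(a^{-1})^2$ yields $2\equiv (ca^{-1})^2\pmod p$. Thus $2$ is a nonzero quadratic residue modulo the odd prime $p$, i.e.\ $\jac{2}{p}=1$.

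Finally I would invoke the classical evaluation $\jac{2}{p}=(-1)^{(p^2-1)/8}$, which is $1$ precisely when $p\equiv\pm1\pmod 8$, giving the claim. There is no real obstacle in the argument itself; the work is entirely in the bookkeeping of the first paragraph (oddness of $p$ from parity, and $p\nmid a$ from primitivity), after which the congruence is immediate from the supplement to quadratic reciprocity.
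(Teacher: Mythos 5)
Your proof is correct and follows essentially the same route as the paper's: reduce $a^2+(a+f)^2=c^2$ modulo $p$, note $p\nmid a$ by primitivity, conclude $2$ is a quadratic residue mod $p$, and apply the second supplement to quadratic reciprocity. Your explicit verification that $p$ is odd (via the parity of $f=b-a$) is a small point the paper leaves implicit, and it is a worthwhile addition since the supplement requires an odd prime.
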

	\begin{proof}
		Let $p$ be a prime dividing $f$ and consider the defining equation modulo $p$, i.e. 
		$$c^2\equiv a^2+(a+f)^2\equiv 2a^2\mod{p}.$$
		Notice that $p$ does not divide $a$, otherwise $p$ also divides $b$ and $c$, and so $(a,a+f,c)$ is not primitive. Now, $(ca^{-1})^2\equiv 2\mod{p}$, making 2 a quadratic residue, and by quadratic reciprocity (see \cite[\S III.5]{Davenport} or \cite[\S 5.2]{IR}) we have $p\equiv \pm 1\mod{8}$.
	\end{proof}
	
	Before continuing, we recast the defining equation for the PPT in question. Let $(a,a+f,c)$ be a PPT. Then, by elementary algebra, we see
		$$\begin{array}{rrl}
			a^2+(a+f)^2=c^2 & \Leftrightarrow & 2a^2+2af+f^2=c^2\\
				& \Leftrightarrow & 4a^2+4af+2f^2-2c^2=0\\
				& \Leftrightarrow & (2a+f)^2+f^2-2c^2=0\\
				& \Leftrightarrow & (2a+f)^2-2c^2=-f^2,
		\end{array}$$
	with the last in the form of a Pell equation $x^2-2y^2=-f^2$.\\
	\indent Pell equations are well-studied Diophantine equations, and expanding our view to the quadratic ring of integers $\Z[\sqrt{2}]$ is a particularly fruitful strategy. We now record some useful information about $\Z[\sqrt{2}]$. First, we define the norm function $N$ on $\Z[\sqrt{2}]$ by $N(\beta)=\beta\Bar{\beta}$.
	
	\begin{theorem}
	\label{Z2ED}
		$\Z[\sqrt{2}]$ is a Euclidean domain. More specifically, if $\alpha,\beta\in\Z[\sqrt{2}]$ and $\beta\neq 0$ then there exist unique $\gamma,\rho\in\Z[\sqrt{2}]$ such that $\alpha=\beta\gamma+\rho$ with $|N(\rho)|<|N(\beta)|$.
	\end{theorem}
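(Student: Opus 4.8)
The plan is to prove $\Z[\sqrt{2}]$ is a Euclidean domain directly from the division statement, since the second sentence implies the first once we recall that $|N|$ is a multiplicative function taking nonnegative integer values on $\Z[\sqrt{2}]\setminus\{0\}$ (indeed $N(a+b\sqrt{2})=a^2-2b^2$, so $|N|$ is a valid Euclidean size function as soon as the division-with-remainder property holds). So the real content is establishing: given $\alpha,\beta\in\Z[\sqrt{2}]$ with $\beta\neq 0$, there exist $\gamma,\rho$ with $\alpha=\beta\gamma+\rho$ and $|N(\rho)|<|N(\beta)|$.

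First I would pass to the field $\Q(\sqrt{2})$ and compute the exact quotient $\alpha/\beta = u + v\sqrt{2}$ with $u,v\in\Q$ (clearing the denominator via $\bar\beta$, so $\alpha/\beta = \alpha\bar\beta/N(\beta)$). Then I would choose $m,n\in\Z$ to be nearest integers to $u$ and $v$ respectively, so that $|u-m|\le \tfrac12$ and $|v-n|\le\tfrac12$, and set $\gamma = m+n\sqrt{2}$ and $\rho = \alpha-\beta\gamma$. The key estimate is on $|N(\rho)|$: since $\rho = \beta\bigl((u-m)+(v-n)\sqrt{2}\bigr)$ and $N$ is multiplicative, $|N(\rho)| = |N(\beta)|\cdot|(u-m)^2 - 2(v-n)^2|$. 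Writing $\delta_1 = u-m$, $\delta_2 = v-n$ with $|\delta_i|\le\tfrac12$, we need $|\delta_1^2 - 2\delta_2^2| < 1$. Here $\delta_1^2 \le \tfrac14$ and $2\delta_2^2 \le \tfrac12$, and since $\delta_1^2 - 2\delta_2^2$ lies between $-\tfrac12$ and $\tfrac14$, its absolute value is at most $\tfrac12 < 1$. Hence $|N(\rho)| \le \tfrac12|N(\beta)| < |N(\beta)|$, as required.

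For the uniqueness claim, I would argue that if $\alpha = \beta\gamma + \rho = \beta\gamma' + \rho'$ with both remainders satisfying the norm bound, then $\beta(\gamma-\gamma') = \rho'-\rho$, so $|N(\beta)|\cdot|N(\gamma-\gamma')| = |N(\rho'-\rho)|$. I would want to bound $|N(\rho'-\rho)|$ strictly below $|N(\beta)|$; the cleanest route is to note that in the construction above each of $u,v$ has a \emph{unique} nearest integer unless the fractional part is exactly $\tfrac12$, and to handle that boundary case by checking that the sign freedom still yields $|N(\rho)| \le \tfrac12 |N(\beta)|$ with the specific rounding convention, forcing $N(\gamma-\gamma')=0$, hence $\gamma=\gamma'$ and $\rho=\rho'$. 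The main obstacle is exactly this uniqueness: unlike the Gaussian integers, one must be slightly careful about ties in the rounding, and strictly speaking uniqueness of $(\gamma,\rho)$ is not automatic for a general Euclidean domain, so it relies on pinning down the rounding rule precisely. If a cleaner uniqueness argument proves elusive, I would fall back on citing a standard reference (e.g. \cite[\S 1.3]{IR}) for the Euclidean property of $\Z[\sqrt{2}]$, since only the existence half is used in the subsequent Pell-equation analysis.
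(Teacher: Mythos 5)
Your existence argument is essentially identical to the paper's: pass to $\Q(\sqrt{2})$ via $\alpha/\beta=\alpha\bar\beta/N(\beta)$, round both rational coordinates to nearest integers, and use multiplicativity of $N$ together with the bound $|\delta_1^2-2\delta_2^2|\le \tfrac12<1$. Your worry about uniqueness is well-founded, and in fact sharper than the paper's treatment: the paper simply asserts that uniqueness ``follows from the condition $|r-u|,|s-v|\le\tfrac12$,'' which does not address rounding ties, and the uniqueness claim as stated is actually false --- for example $1=\sqrt{2}\cdot 0+1=\sqrt{2}\cdot\sqrt{2}+(-1)$ gives two decompositions with $|N(\pm1)|=1<2=|N(\sqrt{2})|$. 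Since only existence is used in the subsequent Pell-equation analysis, your instinct to treat the division-with-remainder property as the real content and to be skeptical of uniqueness is exactly right.
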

	\begin{proof}
		We employ a classical argument for quadratic rings, as in \cite[\S 1.4]{IR} for $\Z[i]$. Let $\alpha\in\Z[\sqrt{2}]$ and $0\neq\beta\in\Z[\sqrt{2}]$. From $\frac{\alpha}{\beta}=\frac{\alpha\bar{\beta}}{\beta\bar{\beta}},$ and both $\alpha\bar{\beta},\beta\bar{\beta}\in\Z[\sqrt{2}]$, we see $\frac{\alpha}{\beta}\in\Q(\sqrt{2})$. Hence $\frac{\alpha}{\beta}=r+s\sqrt{2}$, with $r,s\in\Q$. Let $u,v\in\Z$ such that $|r-u|\leq\frac{1}{2}$ and $|s-v|\leq\frac{1}{2}$, and define $\gamma=u+v\sqrt{2}$ and $\rho=\alpha-\beta\gamma$. By the multiplicativity of the the norm map $N$, 
			$$|N(\rho)|=\left|N\left(\frac{\alpha}{\beta}-\gamma\right)N(\beta)\right|=\left|N\left(\frac{\alpha}{\beta}-\gamma\right)\right||N(\beta)|$$
		with
			$$\left|N\left(\frac{\alpha}{\beta}-\gamma\right)\right|=\left|(r-u)^2-2(s-v)^2\right|.$$
		Since $|r-s|\leq\frac{1}{2}$ and $|s-v|\leq\frac{1}{2}$, we have $0\leq (r-u)^2\leq \frac{1}{4}$ and $0\leq (s-v)^2\leq \frac{1}{4}$, and in turn $\left|N\left(\frac{\alpha}{\beta}-\gamma\right)\right|<1$. We conclude $|N(\rho)|<|N(\beta)|$ as desired.\\
		\indent The uniqueness of $\gamma$ and $\rho$ follow from the condition that $|r-u|,|s-v|\leq\frac{1}{2}.$
	\end{proof}
	
	\begin{cor}
	\label{Z2UFD}
		If $\pk\subset\Z[\sqrt{2}]$ is an ideal, then $\pk=(\alpha)$ for some $\alpha\in\Z[\sqrt{2}]$. Moreover, every non-unit $\alpha\in\Z[\sqrt{2}]$ factors into primes uniquely.
	\end{cor}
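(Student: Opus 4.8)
The plan is to deduce both assertions from the Euclidean property of Theorem \ref{Z2ED} via the classical chain ``Euclidean domain $\Rightarrow$ PID $\Rightarrow$ UFD.'' As a preliminary I would record that the norm detects units: from $N(\alpha)N(\beta)=N(\alpha\beta)$ and $N(1)=1$ it follows that $\alpha\in\Z[\sqrt2]$ is a unit iff $|N(\alpha)|=1$ (if $\alpha\beta=1$ then $|N(\alpha)|=1$; conversely if $\alpha\bar\alpha=\pm1$ then $\pm\bar\alpha$ is an inverse of $\alpha$). In particular $|N(\alpha)|$ is an integer $\geq 2$ for every nonzero non-unit $\alpha$, which is what makes the norm a legitimate ``size'' to induct on.

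For the principal-ideal claim, given a nonzero ideal $\pk$ I would choose $\alpha\in\pk\setminus\{0\}$ with $|N(\alpha)|$ minimal (possible since $|N|$ takes positive integer values on nonzero elements). Then $(\alpha)\subseteq\pk$ is clear, and for any $\beta\in\pk$ Theorem \ref{Z2ED} gives $\beta=\alpha\gamma+\rho$ with $|N(\rho)|<|N(\alpha)|$; since $\rho=\beta-\alpha\gamma\in\pk$, minimality forces $\rho=0$, so $\beta\in(\alpha)$. The zero ideal is $(0)$, so every ideal is principal.

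Existence of a prime factorization I would prove by induction on $|N(\alpha)|$ for non-units $\alpha$: if $\alpha$ is irreducible we are done, and otherwise $\alpha=\beta\gamma$ with $\beta,\gamma$ non-units, so $|N(\beta)|,|N(\gamma)|\geq 2$ and $|N(\beta)|,|N(\gamma)|<|N(\alpha)|=|N(\beta)|\,|N(\gamma)|$, and the inductive hypothesis applies to each factor.

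The one step that needs a little thought — call it the heart of uniqueness — is that irreducibles in $\Z[\sqrt2]$ are prime. Suppose $p$ is irreducible, $p\mid\beta\gamma$, and $p\nmid\beta$. By the first part $(p,\beta)=(\delta)$ for some $\delta$; then $\delta\mid p$, so irreducibility of $p$ makes $\delta$ a unit or an associate of $p$. The associate case is ruled out because $\delta\mid\beta$ while $p\nmid\beta$, so $(p,\beta)=(1)$ and $1=px+\beta y$ for some $x,y$; multiplying by $\gamma$ gives $\gamma=p(\gamma x)+(\beta\gamma)y$ with both terms divisible by $p$, hence $p\mid\gamma$. With ``irreducible $=$ prime'' established, uniqueness follows by the standard induction on the number of irreducible factors: in an identity $\qk_1\cdots\qk_k=\qk_1'\cdots\qk_\ell'$ of irreducibles, $\qk_1$ divides — hence is an associate of — some $\qk_j'$; cancel and recurse, absorbing the resulting unit. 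I do not expect a genuine obstacle here; the only care required is bookkeeping with units and associates and confirming that the norm strictly decreases at the factorization step.
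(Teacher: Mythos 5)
Your proof is correct and is exactly the classical chain (Euclidean domain $\Rightarrow$ PID $\Rightarrow$ UFD) that the paper itself invokes; the paper simply cites these standard ring-theoretic results (Gallian, Theorems 18.3--18.4) rather than reproving them, whereas you have written out the standard textbook arguments in full. There is no substantive difference in approach, and your bookkeeping (norm detects units, minimal-norm generator, induction on $|N(\alpha)|$ for existence, irreducible implies prime for uniqueness) is sound.
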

	\begin{proof}
		Both parts of the claim are applications of classical results (e.g. \cite[Theorems 18.3-18.4]{Gallian}) in the theory of rings, following Theorem \ref{Z2ED}.
	\end{proof}
	
	In addition to the above broad structural results, we recall the following well-known factorization of particular ideals in $\Z[\sqrt{2}]$.
	
	\begin{theorem}
	\label{fprimefactors}
		If $p\in\Z$ is a prime such that $p\equiv \pm 1\mod{8}$, then $(p)=\pk\bar{\pk}$, with $\pk,\bar{\pk}$ prime ideals in $\Z[\sqrt{2}]$.
	\end{theorem}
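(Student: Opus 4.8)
The plan is to reduce the claim to a norm computation, using that $\Z[\sqrt 2]$ is a UFD (Corollary \ref{Z2UFD}) and that the units of $\Z[\sqrt 2]$ are exactly the elements of norm $\pm 1$. First, since $p\equiv\pm 1\pmod 8$, the supplement to quadratic reciprocity (\cite[\S III.5]{Davenport}, \cite[\S 5.2]{IR}) gives $\jac{2}{p}=1$, so there is an $n\in\Z$ with $n^2\equiv 2\pmod p$; equivalently $p\mid n^2-2=(n-\sqrt 2)(n+\sqrt 2)$ in $\Z[\sqrt 2]$.

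Next I would show that $p$ is not a prime element of $\Z[\sqrt 2]$: otherwise $p$ would divide $n-\sqrt 2$ or $n+\sqrt 2$, but $\frac{1}{p}(n\pm\sqrt 2)=\frac{n}{p}\pm\frac{1}{p}\sqrt 2\notin\Z[\sqrt 2]$ since $p\nmid 1$, a contradiction. As $\Z[\sqrt 2]$ is a UFD, a nonzero nonunit that is not prime is reducible, so $p=\alpha\beta$ with $\alpha,\beta$ non-units. Applying $N$ and using $N(p)=p^2$ together with multiplicativity, $|N(\alpha)|\,|N(\beta)|=p^2$; since non-units have $|N(\cdot)|\neq 1$, this forces $|N(\alpha)|=|N(\beta)|=p$. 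Now set $\pk=(\alpha)$. From $\alpha\bar\alpha=N(\alpha)=\pm p$ we get $(\alpha)(\bar\alpha)=(p)$, that is $(p)=\pk\bar\pk$. Finally, $\alpha$ is irreducible — any factorization $\alpha=\mu\nu$ would give $|N(\mu)|\,|N(\nu)|=p$, so one of $\mu,\nu$ has norm $\pm 1$ and is a unit — hence prime in the UFD $\Z[\sqrt 2]$; the same argument applies to $\bar\alpha$, so $\pk$ and $\bar\pk$ are prime ideals. If desired, one checks $\pk\neq\bar\pk$: equality would give $(p)=\pk^2$, and comparing norms shows $p$ would ramify, which is impossible for odd $p$.

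The only slightly delicate bookkeeping is the interplay between units and norms in $\Z[\sqrt 2]$ — namely that $\alpha$ is a unit if and only if $N(\alpha)=\pm 1$, and consequently that an element whose norm is a rational prime must be irreducible — which I would record as a one-line lemma, as it follows immediately from multiplicativity of $N$. With that in hand the argument is just the classical dichotomy that $p$ splits in $\Z[\sqrt 2]$ precisely when $2$ is a quadratic residue modulo $p$, so I do not anticipate a genuine obstacle.
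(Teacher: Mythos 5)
Your argument is correct, but it is genuinely different from what the paper does: the paper proves Theorem \ref{fprimefactors} by citation alone (``See, for example, \cite[Theorem 13.1.3]{IR}''), whereas you supply the standard self-contained splitting argument. Your chain of reasoning is sound at every step: the second supplement to quadratic reciprocity gives $\jac{2}{p}=1$, hence $p\mid(n-\sqrt2)(n+\sqrt2)$ while dividing neither factor, so $p$ is not prime and therefore (in the UFD $\Z[\sqrt2]$, Corollary \ref{Z2UFD}) reducible; the norm computation $|N(\alpha)||N(\beta)|=p^2$ together with the fact that units are exactly the elements of norm $\pm1$ forces $|N(\alpha)|=p$, and then $(p)=(\alpha)(\bar\alpha)$ with both generators irreducible, hence prime. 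The one-line lemma you flag (unit $\Leftrightarrow$ norm $\pm1$, and norm-$p$ elements are irreducible) is indeed all that is needed and is immediate from multiplicativity. What each approach buys: the citation keeps the paper lean, while your proof makes the section self-contained in the same spirit as the worked-out proof of Theorem \ref{Z2ED}. One caveat on your optional final remark: showing $\pk\neq\bar\pk$ by saying ``$(p)=\pk^2$ means $p$ ramifies, impossible for odd $p$'' quietly invokes the theorem that only primes dividing the discriminant $8$ ramify, which is itself a fact of the same depth as the one being proved; since the paper actually uses $\pk\not\subset\bar\pk$ later (in the proof of Theorem \ref{verifyPPTf}), if you include that claim you should either cite the ramification criterion explicitly or argue directly, e.g.\ that $(\alpha)=(\bar\alpha)$ would put $2x=\alpha+\bar\alpha$ and $2y\sqrt2=\alpha-\bar\alpha$ in $\pk$, whence $p\mid\gcd(4x^2,8y^2)$, contradicting $x^2-2y^2=\pm p$ with $p$ odd.
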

	\begin{proof}
		See, for example, \cite[Theorem 13.1.3]{IR}.
	\end{proof}
	
%
%
	\subsection{Generating $(a,a+f,c)$ triples}
	\label{genPPTf}
	Let $\gamma=1+\sqrt{2},$ and $\delta=\gamma^2=3+2\sqrt{2}$.
	
	\begin{theorem}
	\label{Pellneg1}
		All solutions of $x^2-2y^2=-1$ are given by $\pm\gamma\delta^m$, for $m\in\mathbb{Z}$.
	\end{theorem}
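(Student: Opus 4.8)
The plan is to translate the equation into a statement about the unit group of $\Z[\sqrt2]$ and then to determine that group explicitly. A pair $(x,y)\in\Z^2$ solves $x^2-2y^2=-1$ if and only if $\beta=x+y\sqrt2$ satisfies $N(\beta)=\beta\bar\beta=-1$, and any such $\beta$ is a unit of $\Z[\sqrt2]$ because $\beta\cdot(-\bar\beta)=1$. Conversely $N(\gamma)=1-2=-1$, $N(-1)=1$, and $N$ is multiplicative, so $\pm\gamma\delta^m=\pm\gamma^{2m+1}$ has norm $(-1)^{2m+1}=-1$ and thus yields a solution. Hence it suffices to show that the units of $\Z[\sqrt2]$ are exactly $\pm\gamma^n$, $n\in\Z$, and then to keep the ones of norm $-1$: since $N(\pm\gamma^n)=(-1)^n$, those are precisely the odd powers, i.e.\ $\pm\gamma^{2m+1}=\pm\gamma\delta^m$.

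To pin down the unit group, note that $\gamma>1$ is a unit and that negation and conjugation send units to units, so it is enough to describe units $u>1$. The crux is to show \emph{there is no unit $u$ with $1<u<\gamma$}. For such a $u=x+y\sqrt2$ we have $|N(u)|=1$, so $\bar u=\pm u^{-1}$ with the sign equal to $N(u)$. If $N(u)=1$, then $2x=u+\bar u=u+u^{-1}$; the function $t\mapsto t+t^{-1}$ is increasing on $(1,\infty)$ and carries $(1,\gamma)$ onto $(2,2\sqrt2)$, so $x\in(1,\sqrt2)$, impossible for $x\in\Z$. If $N(u)=-1$, then $2y\sqrt2=u-\bar u=u+u^{-1}\in(2,2\sqrt2)$, so $y\in(1/\sqrt2,1)$, again impossible. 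Therefore $\gamma$ is the smallest unit greater than $1$.

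The remaining descent is routine: given any unit $u>0$, choose the unique $k\in\Z$ with $\gamma^k\le u<\gamma^{k+1}$; then $u\gamma^{-k}$ is a unit in $[1,\gamma)$, hence equals $1$, so $u=\gamma^k$. A unit $u<0$ is handled by applying this to $-u$. Thus the unit group is $\{\pm\gamma^n:n\in\Z\}$, and combined with the first paragraph this shows the solutions of $x^2-2y^2=-1$ are exactly $\pm\gamma\delta^m$, $m\in\Z$.

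I expect the only real obstacle to be the step ruling out units strictly between $1$ and $\gamma$: it is the one place that is not pure formal manipulation, and one must split on the sign of $N(u)$ and use the correct combination ($u+\bar u$ versus $u-\bar u$) in each case so that the resulting real interval excludes an integer coordinate. Everything else — the norm bookkeeping and the logarithmic pigeonhole giving the descent — is mechanical.
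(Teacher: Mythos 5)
Your argument is correct, but it is worth noting that the paper does not actually prove this theorem: its ``proof'' is a citation to \cite[\S 8.2]{LeVeque}, where the structure of solutions to the negative Pell equation is established. What you have written is essentially the standard self-contained argument that the citation stands in for: identify solutions with elements of norm $-1$ in $\Z[\sqrt{2}]$, show the unit group is $\{\pm\gamma^n : n\in\Z\}$ by proving $\gamma$ is the smallest unit exceeding $1$, and then sort units by the sign of their norm, so that the norm-$(-1)$ units are exactly the odd powers $\pm\gamma^{2m+1}=\pm\gamma\delta^m$. You have handled the one genuinely delicate step correctly: ruling out a unit $u$ with $1<u<\gamma$ requires splitting on $N(u)=\pm1$ and extracting the integer coordinate from $u+u^{-1}$, which lies in $(2,2\sqrt{2})$ and so forces $x\in(1,\sqrt{2})$ or $y\in(1/\sqrt{2},1)$, neither of which contains an integer; the descent via $\gamma^k\le u<\gamma^{k+1}$ and the reduction of negative units to positive ones are then routine, as you say. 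The trade-off is the usual one: the paper keeps its exposition short by outsourcing a classical fact, while your version makes the section logically self-contained at the cost of about a page, and it has the side benefit of exhibiting $\gamma$ explicitly as the fundamental unit, which is implicitly used again when the paper writes general solutions as $\pm\gamma\delta^m\uk^2$ in Theorem \ref{Conj3}.
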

	\begin{proof}
		See, for example, \cite[\S 8.2]{LeVeque}.
	\end{proof}
	
	We see a recurrence within solutions of the classical negative Pell equation in the previous theorem.
	
	\begin{prop}
	\label{Pellrecursion}
		Suppose $\delta=3+2\sqrt{2}$ is as above, and $t=j+k\sqrt{2}$. Define, for $n\geq 0$, sequences $A_n$ and $B_n$ by $t\delta^n=A_nt+B_n(2k+j\sqrt{2})$. Then
		\begin{center}
			\begin{tabular}{llll}
				$A_0=1$,\hspace{5mm} & $A_1=3$,\hspace{5mm} & and\hspace{5mm} & $A_n=6A_{n-1}-A_{n-2}$, for $n\geq 2$,\\
				$B_0=0$, & $B_1=2$, & and & $B_n=6B_{n-1}-B_{n-2}$, for $n\geq 2$.\\
			\end{tabular}
		\end{center}
	\end{prop}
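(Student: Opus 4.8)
The plan is to reduce the statement to the elementary fact that powers of $\delta$ satisfy a linear recurrence coming from its minimal polynomial. The first step is to notice that $2k+j\sqrt{2}=\sqrt{2}\,t$, since $\sqrt{2}\,t=\sqrt{2}(j+k\sqrt{2})=2k+j\sqrt{2}$. Thus the defining relation $t\delta^n=A_nt+B_n(2k+j\sqrt{2})$ reads $t\delta^n=(A_n+B_n\sqrt{2})\,t$, and (assuming $t\neq 0$, which is harmless since $A_n,B_n$ do not depend on $t$) cancelling $t$ shows the claim is equivalent to $\delta^n=A_n+B_n\sqrt{2}$. Because $\{1,\sqrt{2}\}$ is a $\Q$-basis of $\Q(\sqrt{2})$, this determines the integers $A_n$ and $B_n$ uniquely as the coordinates of $\delta^n\in\Z[\sqrt{2}]$.

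Next I would compute $\delta+\bar{\delta}=6$ and $\delta\bar{\delta}=3^2-2\cdot 2^2=1$, so that $\delta$ is a root of $x^2-6x+1$, i.e. $\delta^2=6\delta-1$. Multiplying through by $\delta^{n-2}$ gives $\delta^n=6\delta^{n-1}-\delta^{n-2}$ for all $n\geq 2$. Substituting $\delta^m=A_m+B_m\sqrt{2}$ for $m=n-2,n-1,n$ and comparing the rational and $\sqrt{2}$-components — valid since $\sqrt{2}\notin\Q$ and all the $A$'s and $B$'s are integers — yields $A_n=6A_{n-1}-A_{n-2}$ and $B_n=6B_{n-1}-B_{n-2}$. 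The initial values come straight from $\delta^0=1$ and $\delta^1=3+2\sqrt{2}$, giving $A_0=1$, $B_0=0$, $A_1=3$, $B_1=2$.

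I do not expect a genuine obstacle here. The argument is the standard one for extracting a linear recursion from the minimal polynomial of an algebraic integer, combined with the irrationality of $\sqrt{2}$. The only steps that deserve a sentence of care are the identification $2k+j\sqrt{2}=\sqrt{2}\,t$ that converts the hypothesis into $\delta^n=A_n+B_n\sqrt{2}$, and the mild assumption $t\neq 0$ used in the cancellation; the latter could be sidestepped either by simply defining $A_n,B_n$ via $\delta^n=A_n+B_n\sqrt{2}$, or by running a direct induction on $n$ that writes $t\delta^n=\delta\cdot(t\delta^{n-1})$ and reduces $\delta^2$ using $\delta^2=6\delta-1$.
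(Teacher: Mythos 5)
Your proof is correct, but it takes a genuinely different route from the paper's. The paper proves the proposition by a two-stage induction: it first expands $t\delta^n=(t\delta^{n-1})\delta$ directly to obtain the coupled first-order recurrences $A_n=3A_{n-1}+4B_{n-1}$ and $B_n=2A_{n-1}+3B_{n-1}$, and then runs a second induction to show that these coupled relations are equivalent to the uncoupled second-order recurrences $A_n=6A_{n-1}-A_{n-2}$ and $B_n=6B_{n-1}-B_{n-2}$ in the statement. You instead observe that $2k+j\sqrt{2}=\sqrt{2}\,t$, so the defining relation collapses to $\delta^n=A_n+B_n\sqrt{2}$, and then read the uncoupled recurrence directly off the minimal polynomial $x^2-6x+1$ of $\delta$ (equivalently, off $\delta+\bar{\delta}=6$ and $\delta\bar{\delta}=1$). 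Your version is shorter, explains where the coefficients $6$ and $-1$ come from (trace and norm of $\delta$), and generalizes immediately to powers of any algebraic integer; the paper's version stays entirely inside the $\{t,\,2k+j\sqrt{2}\}$ bookkeeping that is reused later in Algorithm \ref{algoPPTf}, at the cost of the extra equivalence-of-recurrences step. Your attention to the well-definedness of $A_n,B_n$ (uniqueness of coordinates in the $\Q$-basis $\{1,\sqrt{2}\}$, and the harmless assumption $t\neq 0$) is a point the paper glosses over, and is worth keeping.
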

	\begin{proof}
		We first establish the initial conditions and then intermediate recurrence relations for $A_n$ and $B_n$. The case $n=0$ is clear as $t\delta^0=t$. Next,
		$$t\delta=3(j+k\sqrt{2})+2\sqrt{2}(j+k\sqrt{2})=3t+2(2k+j\sqrt{2}),$$
		showing that $A_1=3$ and $B_1=2$.\\
		
		Now, assume $A_k$ and $B_k$ satisfy the given defining equation for all $k<n$. We compute
		$$\begin{array}{rl}
		t\delta^n= & \left(A_{n-1}(j+k\sqrt{2})+B_{n-1}(2k+j\sqrt{2})\right)(3+2\sqrt{2})\\
		= & 3A_{n-1}j+3A_{n-1}k\sqrt{2}+4A_{n-1}k+4B_{n-1}j+6B_{n-1}k\\
		& \hspace{4mm}+2A_{n-1}j\sqrt{2}+3B_{n-1}j\sqrt{2}+4B_{n-1}k\sqrt{2}\\
		= & (3A_{n-1}+4B_{n-1})j+(3A_{n-1}+4B_{n-1})k\sqrt{2}\\
		& \hspace{4mm}+(4A_{n-1}+6B_{n-1})k+(2A_{n-1}+3B_{n-1})j\sqrt{2}\\
		= & (3A_{n-1}+4B_{n-1})t+(2A_{n-1}+3B_{n-1})(2k+j\sqrt{2}).\\
		\end{array}$$
		Thus, we see $A_n=3A_{n-1}+4B_{n-1}$ and $B_n=2A_{n-1}+3B_{n-1}$. To finish, we show that these same mixed recurrence relations are satisfied when $A_n=6A_{n-1}-A_{n-2}$ and $B_n=6B_{n-1}-B_{n-2}$.\\
		
		Consider $x_n$ and $y_n$ defined by the two recurrence relations of the conclusion of the claim. Suppose the mixed recurrence relations on $A_n$ and $B_n$ from the previous paragraph hold for $x_k$ and $y_k$, for every $k<n$. We compute
		$$\begin{array}{rlrl}
		x_n= & 6(3x_{n-2}+4y_{n-2})-x_{n-2} & y_n= & 6(2x_{n-2}+3y_{n-2})-y_{n-2}\\
		= & 3x_{n-1}+3(3x_{n-2}+4y_{n-2})-x_{n-2} & = & 3y_{n-1}+3(2x_{n-2}+3y_{n-2})-y_{n-2}\\
		= & 3x_{n-1}+8x_{n-2}+12y_{n-2} & = & 3y_{n-1}+8y_{n-2}+6x_{n-2}\\
		= & 3x_{n-1}+4(2x_{n-2}+3y_{n-2}) & = & 3x_{n-1}+2(3x_{n-2}+4y_{n-2})\\
		= & 3x_{n-1}+4y_{n-1} & = & 3y_{n-1}+2x_{n-1}.\\
		\end{array}$$
		Thus, when satisfying the same initial conditions, we see $A_n=x_n$ and $B_n=y_n$, for all $n\geq 0$.
	\end{proof}
	
	Let $(a,a+f,c)$ be a PPT. Recall that $a^2+(a+f)^2=c^2$ if and only if $(2a+f)^2-2c^2=-f^2$. Now suppose $f=p_1^{t_1}\ldots p_n^{t_n}$, with $p_i\in \Z, t_i\in\Z^+$. By Proposition \ref{fprimes}, we know that all the primes $p_i$ dividing $f$ satisfy $p_i\equiv\pm 1 \mod 8$. Since every ideal $(p_i)$ splits over $\Z[\sqrt{2}]$ according to Theorem \ref{fprimefactors}, we know that the ideal $(f)$ can also be split into conjugate pair factors over $\mathbb{Z}[\sqrt{2}]$.\\
	\indent Now, let $(f)=\pk_1^{t_1}\bar{\pk}_1^{t_1}\ldots \pk_n^{t_n}\bar{\pk}_n^{t_n}$ and 
		$$\Ck_f:=\braces{\prod_i \qk_i^{t_i}:~\qk_i=\pk_i~\text{or}~\qk_i=\bar{\pk}_i,~\text{for each}~1\leq i\leq n}.$$
	Note that if $\uk\in \Ck_f$, then whenever $\qk_i\mid\uk$, we have $p_i\nmid \uk$. Also $\uk\in\Ck_f$ implies $\uk\bar{\uk}=f$, and hence $N(\uk)=f$.
	
	\begin{theorem}
	\label{Conj3}
		Recall $\gamma=1+\sqrt{2},~\delta=\gamma^2\in\Z[\sqrt{2}]$.	All the PPTs of the form $(a,a+f,c)$ are also of the form $a=\frac{X-f}{2}$, $b=\frac{X+f}{2}$, and $c=Y$, where $X,Y$ are the components of $\pm\gamma\delta^m\uk^2$, for some $m\in \mathbb{Z}$ and some $\uk\in\Ck_f$.
	\end{theorem}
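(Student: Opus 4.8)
The plan is to recast the problem in the ring $\Z[\sqrt2]$ and let unique factorization of ideals, constrained by primitivity, dictate the shape of every solution. Starting from a PPT $(a,a+f,c)$, set $X=2a+f$ and $Y=c$, both positive, so that (by the recasting already recorded) $X^2-2Y^2=-f^2$, i.e.\ $N(X+Y\sqrt2)=-f^2$. Passing to principal ideals, $(X+Y\sqrt2)\big(\overline{X+Y\sqrt2}\big)=\big(X^2-2Y^2\big)=(f^2)$. Writing $f=p_1^{t_1}\cdots p_n^{t_n}$, Proposition \ref{fprimes} gives $p_i\equiv\pm1\pmod 8$ for each $i$, so Theorem \ref{fprimefactors} yields $(p_i)=\pk_i\bar{\pk}_i$ with $\pk_i\neq\bar{\pk}_i$, whence $(f^2)=\prod_i\pk_i^{2t_i}\bar{\pk}_i^{2t_i}$. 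Since $\Z[\sqrt2]$ has unique factorization of ideals (Corollary \ref{Z2UFD}) and $(X+Y\sqrt2)$ divides $(f^2)$, write $(X+Y\sqrt2)=\prod_i\pk_i^{e_i}\bar{\pk}_i^{g_i}$; conjugating this identity and multiplying it by the original gives $\prod_i\pk_i^{e_i+g_i}\bar{\pk}_i^{e_i+g_i}=(f^2)$, so $e_i+g_i=2t_i$ for all $i$.

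The decisive step is to use primitivity to separate the two conjugate primes. If $p_i\mid a$, then $p_i\mid a+f$ and hence $p_i\mid c$, contradicting primitivity of $(a,a+f,c)$; since $p_i$ is odd, it follows that $p_i\nmid X=2a+f$. Therefore $p_i$ does not divide $X+Y\sqrt2$ in $\Z[\sqrt2]$, i.e.\ $(p_i)=\pk_i\bar{\pk}_i$ does not divide $(X+Y\sqrt2)$, so $e_i$ and $g_i$ are not both positive. Combined with $e_i+g_i=2t_i$ this forces $\{e_i,g_i\}=\{0,2t_i\}$, so that $(X+Y\sqrt2)=\prod_i\qk_i^{2t_i}=\uk^2$ with $\qk_i\in\{\pk_i,\bar{\pk}_i\}$ and $\uk:=\prod_i\qk_i^{t_i}\in\Ck_f$. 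As $\Z[\sqrt2]$ is a principal ideal domain, $\uk=(u)$ for some $u$, and $(X+Y\sqrt2)=(u^2)$ then gives $X+Y\sqrt2=\varepsilon u^2$ for a unit $\varepsilon$. Comparing norms, $N(u^2)=f^2$ (because $N(\uk)=f$) while $N(X+Y\sqrt2)=-f^2$, so $N(\varepsilon)=-1$; Theorem \ref{Pellneg1} then gives $\varepsilon=\pm\gamma\delta^m$ for some $m\in\Z$, and hence $X+Y\sqrt2=\pm\gamma\delta^m u^2$.

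Finally, $X$ and $Y$ are by construction the integer components of $\pm\gamma\delta^m\uk^2$, and reversing the substitutions $X=2a+f$ and $Y=c$ yields $a=\tfrac{X-f}{2}$, $b=a+f=\tfrac{X+f}{2}$, and $c=Y$, exactly the asserted form; note that only this one direction (every PPT has such a description) is claimed, so no verification that an arbitrary $\pm\gamma\delta^m\uk^2$ produces a PPT is needed. The step I expect to require the most care is the ideal bookkeeping in the middle paragraph — extracting $e_i+g_i=2t_i$ from the conjugate factorization while extracting $\min(e_i,g_i)=0$ from primitivity, which is precisely where the hypothesis $p_i\equiv\pm1\pmod 8$ (so $p_i\neq2$, the ideal $(p_i)$ is genuinely split, and $\pk_i\neq\bar{\pk}_i$) is used — followed by checking that the resulting $\uk=\prod_i\qk_i^{t_i}$ is exactly an element of $\Ck_f$.
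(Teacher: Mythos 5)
Your proof is correct and follows essentially the same route as the paper's: use primitivity to show no rational prime $p_i\mid f$ divides $2a+f$, so conjugate prime ideals above $p_i$ cannot both divide $(2a+f+\sqrt{2}c)$, forcing that ideal to be $\uk^2$ for some $\uk\in\Ck_f$, and then identify the leftover unit of norm $-1$ as $\pm\gamma\delta^m$ via Theorem \ref{Pellneg1}. Your explicit exponent bookkeeping ($e_i+g_i=2t_i$ together with $\min(e_i,g_i)=0$) actually tightens a point the paper leaves implicit, namely that the correct $\uk$ is determined by the factorization of $(2a+f+\sqrt{2}c)$ rather than chosen in advance.
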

	
	\begin{proof}
		Let $(a,a+f,c)$ be a PPT, so $(2a+f)^2-2c^2=-f^2$. Let $\uk\in\Ck_f$, so we have
			$$((2a+f)-\sqrt{2}c)((2a+f)+\sqrt{2}c)=-\uk^2\bar{\uk}^2.$$
		Without loss of generality, let $\pk_i\mid \uk$ and $\bar{\pk}_i\mid \bar{\uk}$. Suppose that both $\pk_i$ and $\bar{\pk}_i$ divide $(2a+f+\sqrt{2}c)$. Then $\pk_i\bar{\pk}_i \mid (2a+f+\sqrt{2}c)$, i.e. $p_i\mid (2a+f+\sqrt{2}c)$, and since $p_i\in\Z$, we know $p_i$ divides $c$.\\
		\indent We also know that $p_i\mid f$, so $p_i$ divides two terms of $(2a+f+\sqrt{2}c)$, hence also must divide $2a$. From $p_i\mid f$, we also know $p_i\equiv\pm1\mod8$, and hence $p_i$ does not divide $2$. So $p_i\mid a$.\\
		\indent Now, we have that $p_i\mid a,a+f,c$, contradicting that $(a,a+f,c)$ is a PPT. Thus, for any prime $\pk_i$ dividing $(2a+f+\sqrt{2}c),$ we know that $\bar{\pk}_i\nmid (2a+f+\sqrt{2}c)$. Since no prime and it's conjugate divide $\uk$, it must be that $\uk^2\mid (2a+f+\sqrt{2}c)$.\\
		\indent Let $\alpha=2a+f+\sqrt{2}c$. From $N(\uk^2)=f^2$ and $N(\alpha)=-f^2$, we now have $N\left(\frac{\alpha}{\uk^2}\right)=-1$. Then, by Theorem \ref{Pellneg1}, the components $x_0+\sqrt{2}y_0$ of $\frac{\alpha}{\uk^2}$ are solutions to $x^2-2y^2=-1$. So $\frac{\alpha}{\uk^2}=\pm\gamma\delta^m$, for some $m$. Then
			$$\alpha=2a+f+ \sqrt{2}c =\pm\gamma\delta^m \uk^2,$$
		and in turn $X=2a+f,Y=c.$ So $a=\frac{X-f}{2}$, $c=Y$.
	\end{proof}
	
	The above theorem allows us to produce infinitely many triples of the desired form from solutions to the Pell equation $x^2-2y^2=-1$. We now restrict\footnote{The restriction on $f$ is for convenience and clarity of notation. Both Algorithm \ref{algoPPTf} and Theorem \ref{verifyPPTf} can be generalized to any $f$ satisfying the conditions of \S\ref{fconditions}.} to the case $f\equiv \pm 1\mod{8}$ is prime, and combine Proposition \ref{Pellrecursion} and Theorem \ref{Conj3} as an algorithm.
	
	\begin{algorithm}
	\label{algoPPTf}
		Recall $\gamma=1+\sqrt{2},~\delta=\gamma^2\in\Z[\sqrt{2}]$. Fix a rational prime $f$ with $f\equiv \pm 1\mod{8}$. A PPT $(a,a+f,c)$ can be generated\footnote{Each step can be accomplished with functionality available in Sage \cite{Sage}} as follows:
			\begin{enumerate}
				\item Factor the ideal $(f)$ over $\Z[\sqrt{2}]$ as $(f)=\pk\bar{\pk}$.
				\item Determine a generator $\uk\in\Z[\sqrt{2}]$ of $\pk$.
				\item Choose $m\in\Z^+$. Set $A_0=1$, $A_1=3$, $B_0=0$, and $B_1=2$.\\
						For $2\leq i\leq m$, compute $A_i=6A_{i-1}-A_{i-2}$ and $B_i=6B_{i-1}-B_{i-2}$
				\item Set $X+Y\sqrt{2}=((A_m+2B_m)+(A_m+B_m)\sqrt{2})\uk^2$
				\item Set $a=\frac{X-f}{2}$, $b=\frac{X+f}{2}$, and $c=Y$.
			\end{enumerate}
	\end{algorithm}
	\begin{theorem}
	\label{verifyPPTf}
		Let $f,~\gamma,~\delta$, and $\uk$ be as in Algorithm \ref{algoPPTf}. If $(a,b,c)$ is generated by Algorithm \ref{algoPPTf} then it is a PPT.
	\end{theorem}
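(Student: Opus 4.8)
The plan is to verify the two defining properties of a PPT separately — that $(a,a+f,c)$ is a Pythagorean triple with integer entries, and that it is primitive — and both reduce to understanding the element $X+Y\sqrt2$ in $\Z[\sqrt2]$ produced by the algorithm. First I would identify what Step 4 computes: taking $t=\gamma=1+\sqrt2$ (so $j=k=1$) in Proposition \ref{Pellrecursion} gives $\gamma\delta^m=(A_m+2B_m)+(A_m+B_m)\sqrt2$, so Step 4 sets $X+Y\sqrt2=\gamma\delta^m\uk^2$. Since $\uk$ generates $\pk$ and $(f)=\pk\bar\pk$, we have $(\uk\bar\uk)=(f)$, i.e.\ $(N(\uk))=(f)$ as ideals; because $\pm1$ are the only rational-integer units of $\Z[\sqrt2]$, this forces $N(\uk)=\pm f$, hence $N(\uk^2)=f^2$. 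Taking norms in $X+Y\sqrt2=\gamma\delta^m\uk^2$ and using $N(\gamma)=-1$, $N(\delta)=1$ gives $X^2-2Y^2=N(X+Y\sqrt2)=-f^2$. As Step 5 sets $a=\tfrac{X-f}{2}$ and $c=Y$, we get $2a+f=X$, and the elementary equivalence $a^2+(a+f)^2=c^2\Leftrightarrow(2a+f)^2-2c^2=-f^2$ recorded before Theorem \ref{Z2ED} shows $(a,a+f,c)$ solves the Pythagorean equation.

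Next I would check integrality, i.e.\ that $X$ is odd, so that $a=\tfrac{X-f}{2}$ and $b=\tfrac{X+f}{2}$ lie in $\Z$ (recall $f$ is odd). Writing $\uk=u+v\sqrt2$, the identity $u^2-2v^2=N(\uk)=\pm f$ forces $u$ odd; and the recurrence $A_n=6A_{n-1}-A_{n-2}$ with $A_0=1$, $A_1=3$ makes every $A_m$ odd, hence $A_m+2B_m$ odd. Expanding $X+Y\sqrt2=\bigl((A_m+2B_m)+(A_m+B_m)\sqrt2\bigr)\bigl((u^2+2v^2)+2uv\sqrt2\bigr)$, one reads off $X\equiv(A_m+2B_m)(u^2+2v^2)\equiv1\pmod 2$, so $a,b\in\Z$ and $b=a+f$.

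For primitivity it suffices, by the remark in \S\ref{intro}, to show $\gcd(a,a+f)=1$. Any common divisor divides $(a+f)-a=f$, so the gcd is $1$ or $f$. If it were $f$, then $f\mid a$, hence $f\mid 2a+f=X$; and since $a^2+(a+f)^2=c^2$ forces $f\mid c=Y$ as well, we would get $X+Y\sqrt2\in(f)=\pk\bar\pk$. But $(X+Y\sqrt2)=(\gamma\delta^m\uk^2)=(\uk)^2=\pk^2$, since $\gamma$ and $\delta$ are units, and $\pk\neq\bar\pk$ because $f\neq2$ makes $(f)$ split rather than ramify in Theorem \ref{fprimefactors}. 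By unique factorization of ideals (Corollary \ref{Z2UFD}), $\pk\bar\pk\mid\pk^2$ is impossible, a contradiction. Hence $\gcd(a,a+f)=1$ and $(a,a+f,c)$ is a PPT.

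The step I expect to be the main obstacle is the primitivity argument, specifically being careful to record that $(X+Y\sqrt2)$ equals $\pk^2$ \emph{exactly} — not merely that $\pk$ divides it — and that $\pk\neq\bar\pk$, so that the contradiction $\pk\bar\pk\mid\pk^2$ is genuine; the parity check ensuring $a,b\in\Z$ is a small but necessary point that is easy to overlook. Everything else is a short norm computation. Note also that the theorem as stated asserts only the PPT property, not positivity of $a,b,c$, so no analysis of signs is needed.
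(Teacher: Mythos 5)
Your proof is correct and follows essentially the same route as the paper: the Pythagorean property comes from the norm computation $N(\gamma\delta^m\uk^2)=-f^2$ together with the equivalence $(2a+f)^2-2c^2=-f^2 \Leftrightarrow a^2+(a+f)^2=c^2$, and primitivity is reduced to $f\mid X$ and $f\mid Y$, which contradicts the splitting $(f)=\pk\bar\pk$ with $\pk\neq\bar\pk$ (the paper phrases this as $\pk\subset\bar\pk$ being impossible, you phrase it as $\pk\bar\pk\nmid\pk^2$ --- the same fact). Your explicit parity check that $X$ is odd, so that $a,b\in\Z$, is a detail the paper's proof leaves implicit and is a worthwhile addition.
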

	\begin{proof}
		That $(a,b,c)$ is a PT is established in the paragraph preceding Theorem \ref{Z2ED}, along with Theorem \ref{Pellneg1} and Proposition \ref{Pellrecursion}.\\
		\indent Suppose $(a,b,c)$ is not primitive, and consider $q\in\Z$ with $q\mid a$, $q\mid b$, and $q\mid c$. As $b-a=f$ and $b+a=X$, we have $q=f$ and so $f\mid X$ and $f\mid Y$. Now $f\beta=\gamma\delta^m\uk^2$, for some $\beta\in\Z[\sqrt{2}]$, and from $f=\uk\bar{\uk}$, it follows $\bar{\uk}\beta=\gamma\delta^m\uk$. As $\gamma$ is a unit in $\Z[\sqrt{2}]$, this implies $\uk\in(\bar{\uk})$, i.e. $\pk\subset\bar{\pk}$. But this is impossible for $q=f\equiv \pm1\mod{8}$, by Theorem \ref{fprimefactors}.
	\end{proof}
	\section{Density of primitive $(a,b,b+g)$ triples}
	\label{densityPPTg}
	We aim to study the asymptotic density of the set of PPT of $(a,b,b+g)$ form among the set of all PPT with matching parity of $a$. In light of Theorem \ref{PPToldthm}, the set of all PPT of $(r^2-s^2,2rs,r^2+s^2)$ form (and separately PPT with $a=2rs$) is in bijection with the set of pairs $(r,s)$ with $s<r$ and $\gcd(r,s)=1$. We consider\footnote{Our approach is different from \cite[\S 13]{RTB}, who discusses all PPT with bounded hypotenuse, without restricting the parity of $a$ or $b$, and proves an asymptotic formula of Lehmer.}
		$$\Pc(B)=\braces{(r,s):~\gcd(r,s)=1,~0<s<r,~r\leq B}\subset\Z^2.$$
	\indent We begin by recalling some classical number theoretic functions (see for example \cite[\S 2]{Apostol}). The Euler totient function $\phi(n)$ is the number of $k$ such that $1\leq k\leq n$ and $\gcd(k,n)=1$. Also, the M\"{o}bius function $\mu(n)$ is defined by $\mu(1)=1$, $\mu(n)=(-1)^k$ for $n=p_1\cdots p_k$ a product of distinct primes, and $\mu(n)=0$ if $n$ has a square factor. 
	\begin{prop}
		\label{asymptoticPPT}
		$\#\Pc(B)=\frac{3}{\pi^2}B^2+\Oc(B\log B).$
	\end{prop}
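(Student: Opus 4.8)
The plan is to reduce $\#\Pc(B)$ to a partial sum of Euler's totient function and then evaluate that partial sum by M\"obius inversion, reproducing the classical computation of the average order of $\phi$.

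First I would observe that for each integer $r\ge 2$ the number of $s$ with $0<s<r$ and $\gcd(r,s)=1$ is exactly $\phi(r)$: the integers $s\in\{1,\dots,r-1\}$ coprime to $r$ number $\phi(r)$, while $s=r$ (which would otherwise be counted by $\phi$) has $\gcd(r,r)=r\ne 1$; and for $r=1$ there is no admissible $s$. Hence
$$\#\Pc(B)=\sum_{2\le r\le B}\phi(r)=\Big(\sum_{r\le B}\phi(r)\Big)-1,$$
so it suffices to estimate $\sum_{r\le B}\phi(r)$.

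Next, using the identity $\phi(r)=\sum_{d\mid r}\mu(d)\,r/d$ (M\"obius inversion of $\sum_{d\mid r}\phi(d)=r$), I would write $r=dq$ and interchange the order of summation:
$$\sum_{r\le B}\phi(r)=\sum_{d\le B}\mu(d)\sum_{q\le B/d}q=\sum_{d\le B}\mu(d)\cdot\tfrac12\big\lfloor B/d\big\rfloor\big(\big\lfloor B/d\big\rfloor+1\big).$$
Replacing $\lfloor B/d\rfloor$ by $B/d$ in the quadratic term, and bounding $|\mu(d)|\le 1$ and $B/d\ge 1$ for $d\le B$, gives
$$\sum_{r\le B}\phi(r)=\frac{B^2}{2}\sum_{d\le B}\frac{\mu(d)}{d^2}+\Oc\!\Big(B\sum_{d\le B}\frac1d\Big)=\frac{B^2}{2}\sum_{d\le B}\frac{\mu(d)}{d^2}+\Oc(B\log B),$$
using $\sum_{d\le B}1/d=\Oc(\log B)$. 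Finally I would complete the truncated series, writing $\sum_{d\le B}\mu(d)/d^2=\sum_{d=1}^{\infty}\mu(d)/d^2-\sum_{d>B}\mu(d)/d^2$; the tail is bounded in absolute value by $\sum_{d>B}1/d^2=\Oc(1/B)$, and $\sum_{d=1}^{\infty}\mu(d)/d^2=1/\zeta(2)=6/\pi^2$. Therefore $\tfrac{B^2}{2}\sum_{d\le B}\mu(d)/d^2=\tfrac{3}{\pi^2}B^2+\Oc(B)$, and combining the error terms yields $\#\Pc(B)=\tfrac{3}{\pi^2}B^2+\Oc(B\log B)$.

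This argument is entirely classical, so there is no genuine obstacle; the only step requiring care is the bookkeeping of the error terms---confirming that the accumulated error from the floor approximations is of order at most $B\log B$, which dominates the $\Oc(B)$ contribution from the tail of the M\"obius series and produces exactly the stated error bound.
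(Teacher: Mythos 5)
Your proof is correct and follows essentially the same route as the paper: both reduce $\#\Pc(B)$ to the partial sum $\sum_{r\le B}\phi(r)$ and then invoke the classical asymptotic $\frac{3}{\pi^2}B^2+\Oc(B\log B)$, which the paper simply cites from Apostol while you reproduce its standard M\"obius-inversion proof in full. Your handling of the $r=1$ term is in fact slightly more careful than the paper's, though the discrepancy is absorbed by the error term either way.
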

	\begin{proof}
		By definition of $\Pc(B)$,
		$$\#\Pc(B)=\sum_{r\leq B}\#\braces{(r,s):~\gcd(r,s)=1,~0<s<r}=\sum_{r\leq B}\phi(r).$$
		By \cite[Theorem 3.7]{Apostol}, $\sum_{r\leq B}\phi(r)=\frac{3}{\pi^2}B^2+\Oc(B\log B)$.
	\end{proof}
	\indent In light of the proof of Theorem \ref{Conj2} and Corollary \ref{CorConj2}, we consider separately the cases $g=m^2$, with $m$ odd; $g=2m^2$, with $m$ even; and $g=2m^2$, with $m$ odd. By the work in Proposition \ref{Conj1}, these cases deal with PPT of the form $(r^2-s^2,2rs,2rs+m^2)$, $(2rs,r^2-s^2, r^2-s^2+2m^2)$, and $(2rs,r^2-s^2, r^2-s^2+2m^2)$, respectively. Thus, PPT in the first case will be compared with all PPT for which $b=2rs$, and the latter two cases will be compared with all PPT for which $a=2rs$.
	\begin{example}
		Let $g=1$. By Corollary \ref{CorConj2}, a PPT $(a,b,b+1)$ is obtained from $r_n=n+1$ and $s_n=n$, for some $n$. We let $\Gc(B)=\braces{(n+1,n):~n+1\leq B}$, and compute
		$$\lim_{B\to\infty}\frac{\#\Gc(B)}{\#\Pc(B)}=\lim_{B\to\infty}\frac{B}{\frac{3}{\pi^2}B^2}=0.$$
	\end{example}
	The example illustrates that PPT of the form $(a,b,b+g)$, for a fixed $g$, may be rare among all PPT, and so instead we consider the collection of all PPT of the form $(a,b,b+g)$, for each family of values of $g$ in Corollary \ref{CorConj2}.\\
	
	\subsection{Case $g=m^2$ odd} In this case, we know $m$ is odd. By Corollary \ref{CorConj2}, the PPT for this case are given by $r=\frac{1}{2}(k+m)$ and $s=\frac{1}{2}(k-m)$ for some odd integer $k$, with $\gcd(k,m)=1$. The set of such pairs $(k,m)$ is in bijection with the set of $(r,s)$ described by the formulas and satisfying $\gcd(r,s)=1$. Notice $m<k\leq B$ implies $r\leq B$. Let
		$$\GcO(B)=\braces{(k,m):~\gcd(k,m)=1,~0<m<k\leq B,~k,m\equiv 1\mod{2}}\subset\Z^2.$$
	\begin{prop}
		\label{equalparity}
		For $N\in\Z^+$ odd, there are $\phi(N)/2$ odd integers $1\leq m<N$ such that $\gcd(m,N)=1$.
	\end{prop}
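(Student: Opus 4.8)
The plan is to exploit the fixed-point-free involution $m \mapsto N-m$ on the set of residues coprime to $N$. First I would set up notation: let
$$S = \braces{m \in \Z : 1 \le m \le N-1,\ \gcd(m,N)=1},$$
so that $\#S = \phi(N)$ directly from the definition of the Euler totient, and let $S_{\text{odd}} \subseteq S$ be the subset of odd elements. The goal is to show $\#S_{\text{odd}} = \phi(N)/2$.

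The key step is to check that $\sigma\colon m \mapsto N-m$ is a well-defined involution on $S$ with no fixed points. It maps $\{1,\dots,N-1\}$ to itself; it preserves coprimality since $\gcd(N-m,N)=\gcd(m,N)$; it satisfies $\sigma(\sigma(m))=m$; and it has no fixed point, because $\sigma(m)=m$ would force $N=2m$, contradicting that $N$ is odd. Hence $S$ decomposes as a disjoint union of two-element orbits $\{m,\,N-m\}$, and there are exactly $\phi(N)/2$ of them (in particular this re-proves that $\phi(N)$ is even for $N\ge 3$).

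For the parity count, I would observe that within each orbit $\{m,\,N-m\}$ the two elements sum to $N$, which is odd, so exactly one of the two is odd. Summing this over the $\phi(N)/2$ orbits yields exactly $\phi(N)/2$ odd elements in total, i.e. $\#S_{\text{odd}} = \phi(N)/2$, as claimed. (For $N=1$ the assertion is read as vacuous; the content is the case $N\ge 3$.)

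I do not expect a genuine obstacle here: the argument is short and elementary, and the only point requiring care is verifying that $\sigma$ restricts to $S$ and is fixed-point-free — the latter is exactly where the hypothesis that $N$ is odd is used, and everything else is routine bookkeeping.
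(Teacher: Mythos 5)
Your argument is correct and is essentially the same as the paper's: both rest on the pairing $m \leftrightarrow N-m$, which the paper states as a one-line map sending each odd coprime residue to an even one and you package as a fixed-point-free involution on the full set of coprime residues with exactly one odd element per orbit. The extra care you take (well-definedness, fixed-point-freeness via $N$ odd, the edge case $N=1$) only makes the paper's terse proof explicit.
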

	\begin{proof}
		For each odd $m$ with $1\leq m<N$, with $\gcd(m,N)=1$, we know $N-m$ is even, $1<N-m\leq N-1$, and $\gcd(N-m,N)=1$.
	\end{proof}
	We define the parity function $d_2$ by $d_2(n)=1$ if $2\nmid n$ and $d_2(n)=0$ if $2\mid n$.
	\begin{definition}
		\label{2Eulerdef}
		The 2-Euler totient function $\phi_2(n)$ is the product $\phi_2(n)=\left(\phi \cdot d_2\right)(n)$ of the Euler totient function $\phi$ and the parity function $d_2$, so
		$$\phi_2(n)=\left\{\begin{array}{ll} \phi(n) & \text{~if~} 2\nmid n\\ 0 & \text{~if~} 2\mid n.\end{array}\right.$$
	\end{definition}
	Because $\phi$ and $d_2$ are multiplicative functions, we know $\phi_2$ is a multiplicative function also. We set $f(n)=\sum_{d\mid n}\phi_2(d)$ and apply the M\"{o}bius inversion formula (see \cite[Theorem 2.9]{Apostol}) to obtain
	$$\phi_2(n)=\sum_{d\mid n}\mu(d)f\left(\frac{n}{d}\right).$$
	\begin{lemma}
		\label{2Eulerlemma1}
		For $n=2^e u$, with $e\geq 0$ and $2\nmid u$, $f(n)=u$. In particular, if $e=0$ then $f(n)=n$.
	\end{lemma}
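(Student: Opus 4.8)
The plan is to exploit multiplicativity. Since $\phi_2$ is multiplicative (as noted just before this lemma) and the constant function $1$ is multiplicative, their Dirichlet convolution $f(n)=\sum_{d\mid n}\phi_2(d)=(\phi_2*1)(n)$ is again multiplicative by the classical fact that a convolution of multiplicative functions is multiplicative (see, e.g., \cite[Theorem 2.14]{Apostol}). Hence for $n=2^eu$ with $e\geq 0$ and $2\nmid u$ we may split $f(n)=f(2^e)\cdot f(u)$ and evaluate the two factors separately.

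First I would handle the $2$-power factor. Directly from Definition \ref{2Eulerdef}, $\phi_2(1)=\phi(1)d_2(1)=1$ while $\phi_2(2^j)=0$ for every $j\geq 1$, so
$$f(2^e)=\sum_{j=0}^{e}\phi_2(2^j)=\phi_2(1)+\sum_{j=1}^{e}\phi_2(2^j)=1.$$
Next I would treat the odd part: if $u$ is odd then every divisor $d$ of $u$ is odd, so $d_2(d)=1$ and $\phi_2(d)=\phi(d)$ on all such $d$. Therefore
$$f(u)=\sum_{d\mid u}\phi_2(d)=\sum_{d\mid u}\phi(d)=u,$$
the last equality being the classical totient summation identity $\sum_{d\mid u}\phi(d)=u$ (see, e.g., \cite[Theorem 2.2]{Apostol}).

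Combining the two computations gives $f(n)=f(2^e)f(u)=1\cdot u=u$, which is the claim; and when $e=0$ we have $n=u$, so $f(n)=n$. There is no real obstacle here: the only substantive ingredients are the multiplicativity of Dirichlet convolution and the standard identity $\sum_{d\mid u}\phi(d)=u$, both of which are invoked above, and the rest is the immediate bookkeeping of $\phi_2$ vanishing at even arguments.
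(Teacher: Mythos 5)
Your proof is correct, but it is organized differently from the paper's. The paper evaluates $f(n)=\sum_{d\mid n}\phi_2(d)$ directly: it observes that every divisor of $n=2^eu$ has the form $2^av$ with $v\mid u$, that $\phi_2$ kills all the even divisors, and that the surviving odd divisors are exactly the divisors of $u$, whence $f(n)=\sum_{v\mid u}\phi(v)=u$. You instead route the argument through the multiplicativity of the Dirichlet convolution $f=\phi_2*1$, splitting $f(2^eu)=f(2^e)f(u)$ and computing each factor. Both arguments rest on the same two facts --- $\phi_2$ vanishes at even arguments and $\sum_{d\mid u}\phi(d)=u$ --- so the mathematical content is identical; your version imports one extra standard tool (closure of multiplicative functions under convolution, which the paper never explicitly invokes, though it does note $\phi_2$ is multiplicative just before the lemma), in exchange for which the bookkeeping over divisors becomes a clean product of two one-line computations. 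The paper's direct computation is marginally more self-contained; yours generalizes more readily if one wanted to replace $2$ by another prime in the definition of $\phi_2$. Either is acceptable.
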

	\begin{proof}
		All divisors $d$ of $2^e u$ are of the form $d=2^a v$ with $0\leq a\leq e$ and $v\mid u$. When $e=0$, we also have $a=0$ and so all factors of $n$ are odd. Thus
			$$f(n)=\sum_{d\mid n}\phi_2(d)=\sum_{d\mid n}\phi(d)=n,$$ 
		with the right-most equality by \cite[Theorem 2.2]{Apostol}.\\
		\indent For $e>0$, we know $\phi_2(d)=0$ when $2\mid d$, and so $\sum_{d\mid n}\phi_2(n)=\sum_{\text{odd}~d\mid n}\phi(d)$. The odd divisors of $n$ are the numbers $v$ dividing $u$. Now
			$$f(n)=\sum_{d\mid n}\phi_2(d)=\sum_{\text{odd}~d\mid n}\phi(d)=\sum_{v\mid u}\phi(v)=u,$$
		again using \cite[Theorem 2.2]{Apostol} for the last equality.\\
	\end{proof}
	\begin{theorem}
		\label{2Eulertheorem}
		$\sum_{n\leq B}\phi_2(n)=\frac{2}{\pi^2}B^2+\Oc(B\log_2 B).$
	\end{theorem}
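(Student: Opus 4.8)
The plan is to run the Möbius inversion $\phi_2(n)=\sum_{d\mid n}\mu(d)f(n/d)$ already set up above, reducing the partial sum of $\phi_2$ to the partial sum of $f$ twisted by $\mu$. Writing $F(x)=\sum_{q\le x}f(q)$ and summing over the lattice points $(d,q)$ with $dq\le B$,
$$\sum_{n\le B}\phi_2(n)=\sum_{n\le B}\sum_{d\mid n}\mu(d)f(n/d)=\sum_{d\le B}\mu(d)\,F(B/d),$$
so the whole estimate comes down to a good asymptotic for $F(x)$.

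For that I would use that $f(q)$ is the odd part of $q$ (Lemma \ref{2Eulerlemma1}): writing $q=2^a v$ with $v$ odd,
$$F(x)=\sum_{a\ge 0}\ \sum_{\substack{v\le x/2^a\\ v\ \mathrm{odd}}} v.$$
The odd numbers up to $y$ are $1,3,\dots,2k-1$ with $k=\lfloor (y+1)/2\rfloor$, and their sum is $k^2=\tfrac14 y^2+\Oc(y)$. Summing the geometric series $\sum_{a\ge 0}4^{-a}=\tfrac43$ for the main term (the range of $a$ is $a\lesssim \log_2 x$, but extending it to infinity only adds $\Oc(1)$) and $\sum_{a\ge 0}2^{-a}=2$ for the error, this gives $F(x)=\tfrac14 x^2\cdot\tfrac43+\Oc(x)=\tfrac13 x^2+\Oc(x)$.

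Finally I would substitute this back:
$$\sum_{n\le B}\phi_2(n)=\sum_{d\le B}\mu(d)\left(\frac{B^2}{3d^2}+\Oc\!\left(\frac{B}{d}\right)\right)=\frac{B^2}{3}\sum_{d\le B}\frac{\mu(d)}{d^2}+\Oc\!\left(B\sum_{d\le B}\frac1d\right),$$
and use $\sum_{d\le B}\mu(d)/d^2=1/\zeta(2)+\Oc(1/B)=6/\pi^2+\Oc(1/B)$ together with $\sum_{d\le B}1/d=\Oc(\log B)$. Since the term $\tfrac{B^2}{3}\cdot\Oc(1/B)=\Oc(B)$ is absorbed, this yields $\tfrac{B^2}{3}\cdot\tfrac{6}{\pi^2}+\Oc(B\log B)=\tfrac{2}{\pi^2}B^2+\Oc(B\log_2 B)$, the switch from $\log B$ to $\log_2 B$ costing only a constant factor.

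The step that needs the most care is the estimate of $F(x)$: one must check that truncating the sum over $a$ at $\log_2 x$ versus letting it run to infinity costs only $\Oc(x)$, and that the $\Oc(y)$ errors from the inner odd-number sums add up (geometrically) to $\Oc(x)$ rather than $\Oc(x\log x)$. Everything after that is the standard Dirichlet-hyperbola and $\zeta(2)$ bookkeeping, entirely parallel to the argument behind Proposition \ref{asymptoticPPT}.
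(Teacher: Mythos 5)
Your proposal is correct and follows essentially the same route as the paper: M\"{o}bius inversion, the identification of $f(q)$ as the odd part of $q$ from Lemma \ref{2Eulerlemma1}, a geometric series over the powers of $2$, and the $\sum\mu(d)/d^2=6/\pi^2$ evaluation. The only difference is cosmetic --- you fold the odd and even $q$ into a single sum over $a\geq 0$ where the paper treats $a=0$ and $a\geq 1$ separately --- and your error bookkeeping (carrying $\Oc(B/d)$ through to $\Oc(B\log B)$) is if anything slightly cleaner than the paper's.
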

	\begin{proof}
		The argument follows the same strategy as \cite[Theorem 3.7]{Apostol}. By the M\"{o}bius inversion formula stated above, we have
			$$\begin{array}{rl}
				\sum_{n\leq B}\phi_2(n) = & \sum_{n\leq B}\sum_{d\mid n} \mu(d)f\left(\frac{n}{d}\right)\vspace{2mm}\\
				= & \sum_{q,d,~qd\leq B}\mu(d)f(q)\vspace{2mm}\\
				= & \sum_{d\leq B}\mu(d)\sum_{q\leq B/d}f(q)\vspace{2mm}\\
				= & \sum_{d\leq B}\mu(d)\left(\sum_{\text{odd}~q\leq B/d}f(q)+\sum_{\text{even}~q\leq B/d}f(q)\right).\vspace{2mm}
			  \end{array}.$$
		By Lemma \ref{2Eulerlemma1}, and that the sum of the first $N$ consecutive odds yields $N^2$,
			$$\sum_{\text{odd}~q\leq B/d}f(q)=\sum_{\text{odd}~q\leq B/d} q=\left(\frac{B}{2d}\right)^2+\Oc(1)=\frac{B^2}{4d^2}+\Oc(1).$$
		For even $q\leq B/d$ we write $q=2^a j$, with $2\nmid j$, and so using Lemma \ref{2Eulerlemma1},
			$$\begin{array}{rl}
				\sum_{\text{even}~q\leq B/d}f(q) = & \sum_{a=1}^{M_B}\sum_{\text{odd}~j\leq B/(2^a d)} f(2^a j)\vspace{2mm}\\
				= & \sum_{a=1}^{M_B}\sum_{\text{odd}~j\leq B/(2^a d)} j\vspace{2mm}\\
				= & \sum_{a=1}^{M_B}\left(\frac{B}{2^{a+1}d}\right)^2+\Oc(1)\vspace{2mm}\\
				= & \frac{B^2}{d^2}\left(\sum_{a=0}^{M_B+1}\left(\frac{1}{4}\right)^a-\frac{5}{4}\right)+\Oc(\log_2 B)\vspace{2mm}\\
				= & \frac{B^2}{4d^2}\left(\frac{1}{3}-\frac{16}{3}\left(\frac{1}{4}\right)^{M_B+2}\right)+\Oc(\log_2 B),\vspace{2mm}\\
			\end{array}$$
		where $M_B=\lfloor \log_2 B\rfloor$. Since $(1/4)^{\log_2 B + 2}=1/(16B^2)$, we now have
			$$\sum_{\text{even}~q\leq B/d}f(q)=\frac{B^2}{12d^2}+\Oc(\log_2 B).$$
		With these two sums handled, we now have
			$$\begin{array}{rl}
				\sum_{n\leq B}\phi_2(n) = & \sum_{d\leq B}\mu(d)\left(\sum_{\text{odd}~q\leq B/d}f(q)+\sum_{\text{even}~q\leq B/d}f(q)\right).\vspace{2mm}\\
				= & \sum_{d\leq B}\mu(d)\left(\frac{B^2}{4d^2}+\frac{B^2}{12d^2}+\Oc(\log_2 B)\right)\vspace{2mm}\\
				= & \frac{B^2}{3}\sum_{d\leq B}\left(\frac{\mu(d)}{d^2}\right)+\Oc(\log_2 B)\sum_{d\leq B}\mu(d)\vspace{2mm}\\
				= & \frac{B^2}{3}\left(\frac{6}{\pi^2}+\Oc(\frac{1}{B})\right)+\Oc(B\log_2 B),\vspace{2mm}\\
			\end{array}$$
		using \cite[\S 3.7]{Apostol} and \cite[Theorem 3.13]{Apostol}, respectively, to compute the sums involving $\mu(d)$.
	\end{proof}
	\begin{theorem}
		\label{densityPPTg1}
		$\lim_{B\to\infty}\frac{\#\GcO(B)}{\#\Pc(B)}=\frac{1}{3}.$
	\end{theorem}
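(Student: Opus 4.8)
The plan is to compute $\#\GcO(B)$ asymptotically and then divide by the count $\#\Pc(B)=\frac{3}{\pi^2}B^2+\Oc(B\log B)$ supplied by Proposition \ref{asymptoticPPT}. First I would organize $\#\GcO(B)$ by its first coordinate. Since $\GcO(B)$ is the set of pairs $(k,m)$ with $k$ odd, $k\leq B$, and $m$ an odd integer with $0<m<k$ and $\gcd(m,k)=1$,
$$\#\GcO(B)=\sum_{\substack{k\leq B\\ k\ \text{odd}}}\#\{m:~0<m<k,~m\ \text{odd},~\gcd(m,k)=1\}.$$
By Proposition \ref{equalparity}, the inner count equals $\phi(k)/2$ for every odd $k\geq 3$, while the single term $k=1$ contributes $\Oc(1)$. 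Hence
$$\#\GcO(B)=\frac12\sum_{\substack{k\leq B\\ k\ \text{odd}}}\phi(k)+\Oc(1)=\frac12\sum_{k\leq B}\phi_2(k)+\Oc(1),$$
where the second equality is just Definition \ref{2Eulerdef}: $\phi_2(k)=\phi(k)$ for odd $k$ and $\phi_2(k)=0$ for even $k$.

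Next I would substitute the asymptotic from Theorem \ref{2Eulertheorem}, namely $\sum_{k\leq B}\phi_2(k)=\frac{2}{\pi^2}B^2+\Oc(B\log_2 B)$, to get
$$\#\GcO(B)=\frac{1}{\pi^2}B^2+\Oc(B\log_2 B).$$
Dividing by $\#\Pc(B)=\frac{3}{\pi^2}B^2+\Oc(B\log B)$ and factoring $B^2/\pi^2$ from numerator and denominator yields
$$\frac{\#\GcO(B)}{\#\Pc(B)}=\frac{1+\Oc\!\left(\tfrac{\log B}{B}\right)}{3+\Oc\!\left(\tfrac{\log B}{B}\right)},$$
which tends to $\tfrac13$ as $B\to\infty$.

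I do not expect a genuine obstacle here: the one step with real analytic content — the M\"obius-inversion estimate for the partial sums of $\phi_2$ — is already carried out in Theorem \ref{2Eulertheorem}. The only points requiring care are the exact bijective count of odd versus even residues coprime to $k$ (supplied by Proposition \ref{equalparity}) and the tracking of the negligible $k=1$ boundary term; both are routine. If one preferred not to invoke Theorem \ref{2Eulertheorem}, the same conclusion would follow by estimating $\sum_{k\leq B,\ k\ \text{odd}}\phi(k)$ directly: write each divisor appearing in the M\"obius inversion as $2^a v$ with $v$ odd and sum the resulting geometric series in $a$, which is exactly the argument that theorem packages.
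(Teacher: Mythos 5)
Your proof is correct and follows essentially the same route as the paper: reduce $\#\GcO(B)$ to $\tfrac12\sum_{k\leq B}\phi_2(k)$ via Proposition \ref{equalparity}, apply Theorem \ref{2Eulertheorem}, and divide by the count from Proposition \ref{asymptoticPPT}. Your explicit handling of the $k=1$ boundary term is a minor refinement the paper omits, but it does not change the argument.
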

	\begin{proof}
		From the definition of $\GcO(B)$ and Proposition \ref{equalparity}, we have 
			$$\begin{array}{rl}
				\#\GcO(B) = & \#\braces{(k,m):~\gcd(k,m)=1,~0<m<k\leq B,~k,m\equiv 1\mod{2}}\vspace{2mm}\\
				= & \sum_{\text{odd}~k\leq B} \frac{1}{2}\phi(k)\vspace{2mm}\\
				= & \frac{1}{2}\sum_{k\leq B} \phi_2(k).\\
			\end{array}$$
		and thus from Theorem \ref{2Eulertheorem}, $\#\GcO(B)=\frac{1}{\pi^2}B^2+\Oc(B\log B).$ Combining this with Proposition \ref{asymptoticPPT} gives the result.
	\end{proof}
	\subsection{Case $g=2m^2$ even} In this case, we must consider both the case of $m$ even and of $m$ odd. Recall, in the comments following Proposition \ref{asymptoticPPT}, that in this case we compare the PPT of $(a,b,b+2m^2)$ form with all PPT of $(2rs,r^2-s^2,r^2+s^2)$ form.\\
	\indent For $m$ even, following with Corollary \ref{CorConj2}, we define
		$$\GcEE(B)=\braces{(k,m):~\gcd(k,m)=1,~0<m<k\leq B,~m\equiv 0,~k\equiv 1\mod{2}}\subset\Z^2.$$
	\begin{theorem}
		\label{densityPPTg2}
		$\lim_{B\to\infty}\frac{\#\GcEE(B)}{\#\Pc(B)}=\frac{1}{3}.$
	\end{theorem}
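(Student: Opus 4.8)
The plan is to reduce this statement to the count already carried out for $\GcO(B)$ in the proof of Theorem \ref{densityPPTg1}, exploiting the fact that for an odd modulus the coprime residues split evenly by parity. Concretely, I expect $\#\GcEE(B)$ to equal $\#\GcO(B)$ on the nose, so that no new asymptotic analysis is needed beyond Theorem \ref{2Eulertheorem} and Proposition \ref{asymptoticPPT}.

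First I would fix an odd $k\leq B$ and count the admissible $m$ for that $k$, namely the \emph{even} integers with $0<m<k$ and $\gcd(k,m)=1$. The integers in $[1,k)$ coprime to $k$ number $\phi(k)$ in total, and by Proposition \ref{equalparity} exactly $\phi(k)/2$ of them are odd; hence exactly $\phi(k)/2$ of them are even. (The endpoints $m=0$ and $m=k$ cause no trouble since $\gcd(k,0)=\gcd(k,k)=k\neq 1$, and the parity split is precisely the symmetry $m\mapsto k-m$, which reverses parity because $k$ is odd.) Therefore
$$\#\GcEE(B)=\sum_{\text{odd }k\leq B}\frac{\phi(k)}{2}=\frac{1}{2}\sum_{k\leq B}\phi_2(k),$$
using $\phi_2(k)=\phi(k)$ for odd $k$ and $\phi_2(k)=0$ for even $k$.

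This is exactly the expression obtained for $\#\GcO(B)$ in the proof of Theorem \ref{densityPPTg1}, so Theorem \ref{2Eulertheorem} gives $\#\GcEE(B)=\frac{1}{\pi^2}B^2+\Oc(B\log B)$, and dividing by $\#\Pc(B)=\frac{3}{\pi^2}B^2+\Oc(B\log B)$ from Proposition \ref{asymptoticPPT} yields the limit $\frac{1}{3}$. There is no serious obstacle here: the only point requiring care is the parity bookkeeping in the first step, i.e. confirming that counting even coprime residues modulo an odd $k$ gives the same $\phi(k)/2$ as counting odd ones, after which the argument collapses onto the machinery already in place.
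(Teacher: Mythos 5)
Your proof is correct and takes essentially the same route as the paper: both reduce $\#\GcEE(B)$ to $\sum_{\text{odd}\ k\leq B}\tfrac{1}{2}\phi(k)=\tfrac{1}{2}\sum_{k\leq B}\phi_2(k)$ via Proposition \ref{equalparity} and then invoke Theorem \ref{2Eulertheorem} and Proposition \ref{asymptoticPPT}. Your explicit remark that the even coprime residues modulo an odd $k$ also number $\phi(k)/2$ (as the complement of the odd ones) is the small bookkeeping step the paper leaves implicit.
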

	\begin{proof}
		From the definition of $\GcEE(B)$ and Proposition \ref{equalparity}, we have 
			$$\begin{array}{rl}
				\#\GcEE(B) = & \#\braces{(k,m):~\gcd(k,m)=1,~0<m<k\leq B,~m\equiv 0,~k\equiv 1\mod{2}}\vspace{2mm}\\
						   = & \sum_{\text{odd}~k\leq B} \frac{1}{2}\phi(k)\vspace{2mm}\\
						   = & \frac{1}{2}\sum_{k\leq B} \phi_2(k).\\
			\end{array}$$
		and thus from Theorem \ref{2Eulertheorem}, $\#\GcEE(B)=\frac{1}{\pi^2}B^2+\Oc(B\log B).$ Combining this with Proposition \ref{asymptoticPPT} gives the result.
	\end{proof}
	\indent We now consider $m$ odd. As above, we compare the PPT of $(a,b,b+2m^2)$ form with all PPT of $(2rs,r^2-s^2,r^2+s^2)$ form. Following Corollary \ref{CorConj2}, we define
		$$\GcEO(B)=\braces{(k,m):~\gcd(k,m)=1,~0<m<k\leq B,~m\equiv 1,~k\equiv 0\mod{2}}\subset\Z^2.$$
	\begin{theorem}
			\label{densityPPTg3}
		$\lim_{B\to\infty}\frac{\#\GcEO(B)}{\#\Pc(B)}=\frac{1}{3}.$
	\end{theorem}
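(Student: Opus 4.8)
The plan is to follow the template of the proofs of Theorems \ref{densityPPTg1} and \ref{densityPPTg2}: reduce $\#\GcEO(B)$ to a sum of Euler totient values over a restricted range, and then apply Proposition \ref{asymptoticPPT} and Theorem \ref{2Eulertheorem}. The one feature distinguishing this case is that here $k$ runs over \emph{even} integers. If $k$ is even, then any $m$ with $\gcd(m,k)=1$ is automatically odd, so the condition $m\equiv 1\pmod 2$ in the definition of $\GcEO(B)$ imposes nothing extra. Consequently, for each even $k$ with $2\le k\le B$, the number of admissible $m$ (those with $0<m<k$ and $\gcd(m,k)=1$) is exactly $\phi(k)$, using that $\gcd(k,k)=k\neq 1$. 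Thus
$$\#\GcEO(B)=\sum_{\text{even }k\leq B}\phi(k).$$

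Next I would evaluate this sum by complementation. Splitting $\sum_{k\leq B}\phi(k)$ into its odd and even parts gives $\sum_{\text{even }k\leq B}\phi(k)=\sum_{k\leq B}\phi(k)-\sum_{\text{odd }k\leq B}\phi(k)$. The first sum is $\frac{3}{\pi^2}B^2+\Oc(B\log B)$ (this is \cite[Theorem 3.7]{Apostol}, as already used in Proposition \ref{asymptoticPPT}), and the second is $\sum_{k\leq B}\phi_2(k)=\frac{2}{\pi^2}B^2+\Oc(B\log B)$ by Theorem \ref{2Eulertheorem}, since $\phi_2$ vanishes on even arguments and equals $\phi$ on odd ones. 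Subtracting yields
$$\#\GcEO(B)=\frac{1}{\pi^2}B^2+\Oc(B\log B).$$

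Finally, dividing by $\#\Pc(B)=\frac{3}{\pi^2}B^2+\Oc(B\log B)$ from Proposition \ref{asymptoticPPT} and letting $B\to\infty$ gives $\lim_{B\to\infty}\frac{\#\GcEO(B)}{\#\Pc(B)}=\frac{1/\pi^2}{3/\pi^2}=\frac13$, as claimed. There is no serious obstacle in this argument; the only point that warrants care is the parity bookkeeping in the first step — namely recognizing that, in contrast to the cases $\GcO(B)$ and $\GcEE(B)$ (where Proposition \ref{equalparity} produced a factor of $\frac12$), summing over even $k$ forces $m$ odd for free, so no such factor appears. This is precisely what shifts the relevant totient sum from the $\phi_2$-type sum of leading coefficient $\frac{2}{\pi^2}$ to the complementary even-$k$ sum of leading coefficient $\frac{1}{\pi^2}$, leaving the final ratio unchanged at $\frac13$.
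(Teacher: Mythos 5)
Your proposal is correct and follows essentially the same route as the paper's own proof: identify $\#\GcEO(B)=\sum_{\text{even}~k\leq B}\phi(k)$ (since $k$ even forces $m$ odd for free), then evaluate this by subtracting the odd-$k$ sum $\sum_{k\leq B}\phi_2(k)=\frac{2}{\pi^2}B^2+\Oc(B\log B)$ from the full totient sum $\frac{3}{\pi^2}B^2+\Oc(B\log B)$, and divide by $\#\Pc(B)$. Your additional remark contrasting this case with the factor of $\frac12$ arising from Proposition \ref{equalparity} in the other two cases is accurate bookkeeping, but the argument itself matches the paper's.
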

	\begin{proof}
		From the definition of $\GcEO(B)$, we have 
			$$\begin{array}{rl}
				\#\GcEO(B) = & \#\braces{(k,m):~\gcd(k,m)=1,~0<m<k\leq B,~m\equiv 1,~k\equiv 0\mod{2}}\vspace{2mm}\\
						   = & \sum_{\text{even}~k\leq B} \phi(k),\vspace{2mm}\\
			\end{array}$$
		as each $m$ with $\gcd(k,m)=1$ is necessarily odd when $k$ is even. Now, from 
			$$\sum_{k\leq B}\phi(k)=\sum_{\text{even}~k\leq B} \phi(k) + \sum_{\text{odd}~k\leq B}\phi(k)
								   =\sum_{\text{even}~k\leq B} \phi(k) + \sum_{\text{odd}~k\leq B}\phi_2(k),$$
		\cite[Theorem 3.7]{Apostol} (as in Proposition \ref{asymptoticPPT}), and Theorem \ref{2Eulertheorem}, we obtain
			$$\sum_{\text{even}~k\leq B} \phi(k)=\frac{3}{\pi^2}B^2-\frac{2}{\pi^2}B^2 + \Oc(B\log B)=\frac{1}{\pi^2}B^2+\Oc(B\log B).$$

\end{proof}
	\section{Future Work}
	\label{futurework}
		The case of $(a,a+f,c)$ PPT has certainly not been treated as thoroughly as the case of $(a,b,b+g)$ PPT, and so the focus of future work will be those $(a,a+f,c)$ triples. In particular, in \S\ref{genPPTf}, we have not established a precise relationship between PPT of $(a,a+f,c)$ form and PPT of $(r^2-s^2,2rs,r^2+s^2)$ form (or flipped parity), as in Corollary \ref{CorConj2}.\\
		\indent As a first step, if $a=r^2-s^2$ then we have
			$$r^2-s^2+f=a+f=b=2rs.$$
		It follows that
			$$-f=r^2-2rs-s^2=(r-s)^2-2s^2,$$
		and so $(r-s,s)$ are solutions to a similar Pell equation as considered in \S\ref{genPPTf}. Considering $a=2rs$ instead only changes the sign of $f$ in the Pell equation. The goal in the end is to be able to establish density results for PPT of $(a,a+f,c)$ form analogous to \S\ref{densityPPTg}.\\
		\indent Theorem \ref{Conj3} and Algorithm \ref{algoPPTf} do not provide any information about how the PPT produced compare for different choices of $m$ (allowing for negative values also) and $\uk$. In addition, we have given no attention to the $\pm$ sign that comes along with solutions to the classical negative Pell equation. It would be interesting to study how to construct $(a,a+f,c)$ PPT with minimal $a$, and possibly to determine a relation between values $f$ and corresponding minimal $a$.
%
%
%
%
%
%
%
%
%
\bibliographystyle{plain}
\bibliography{PPT}

\end{document}